\documentclass[11pt]{article}

\usepackage{cite}
\usepackage{textcomp}

\oddsidemargin -0.15in
\evensidemargin -0.15in
\textwidth 6.7in
\topmargin -0.5in
\textheight 9.0in

\usepackage[intlimits]{amsmath} 

\usepackage{amsthm}
\usepackage{amssymb}
\usepackage{amsfonts}
\usepackage{graphicx}    
\usepackage{rotating}
\usepackage{color}
\usepackage{epsfig}
\usepackage{verbatim}
\usepackage{setspace}    

\usepackage{CJKutf8}
\usepackage{comment}
\usepackage[colorlinks=true,linkcolor=black,citecolor=black,urlcolor=blue]{hyperref}
\usepackage{mathtools}
\usepackage{subfig}
\usepackage{epstopdf}
\ifpdf
  \DeclareGraphicsExtensions{.eps,.pdf,.png,.jpg}
\else
  \DeclareGraphicsExtensions{.eps}
\fi
\usepackage{easybmat}
\usepackage{arydshln}
\usepackage{xcolor}
\usepackage{hyperref}
\usepackage{catchfilebetweentags}

\usepackage{algorithm}
\usepackage[algo2e, vlined, ruled]{algorithm2e}
\usepackage{algorithmicx}

\makeatletter
\newcommand{\eqnum}{\leavevmode\hfill\refstepcounter{equation}\textup{\tagform@{\theequation}}}
\makeatother
\newcounter{protocol}
\makeatletter

\makeatother

\theoremstyle{plain}
\newtheorem{thm}{Theorem}

\newtheorem{cor}[thm]{Corollary}
\newtheorem{lem}[thm]{Lemma}
\newtheorem{fact}[thm]{Fact}

\newtheorem{dff}[thm]{Definition}

\newtheorem{ex}[thm]{Example}


\usepackage{prettyref}
\newcommand{\pref}[1]{\prettyref{#1}}
\newcommand{\savehyperref}[2]{\texorpdfstring{\hyperref[#1]{#2}}{#2}}
\newrefformat{thm}{\savehyperref{#1}{Theorem~\ref*{#1}}}
\newrefformat{prop}{\savehyperref{#1}{Proposition~\ref*{#1}}}
\newrefformat{cor}{\savehyperref{#1}{Corollary~\ref*{#1}}}
\newrefformat{lem}{\savehyperref{#1}{Lemma~\ref*{#1}}}
\newrefformat{fact}{\savehyperref{#1}{Fact~\ref*{#1}}}
\newrefformat{conj}{\savehyperref{#1}{Conjecture~\ref*{#1}}}
\newrefformat{ass}{\savehyperref{#1}{Assumption~\ref*{#1}}}
\newrefformat{ppt}{\savehyperref{#1}{Property~\ref*{#1}}}
\newrefformat{dff}{\savehyperref{#1}{Definition~\ref*{#1}}}
\newrefformat{cond}{\savehyperref{#1}{Condition~\ref*{#1}}}
\newrefformat{ex}{\savehyperref{#1}{Example~\ref*{#1}}}
\newrefformat{rmk}{\savehyperref{#1}{Remark~\ref*{#1}}}
\newrefformat{alg}{\savehyperref{#1}{Algorithm~\ref*{#1}}}
\newrefformat{ptc}{\savehyperref{#1}{Protocol~\ref*{#1}}}
\newrefformat{sec}{\savehyperref{#1}{Section~\ref*{#1}}}
\newrefformat{app}{\savehyperref{#1}{Appendix~\ref*{#1}}}
\newrefformat{fig}{\savehyperref{#1}{Figure~\ref*{#1}}}
\newrefformat{tab}{\savehyperref{#1}{Table~\ref*{#1}}}
\newrefformat{chap}{\savehyperref{#1}{Chapter~\ref*{#1}}}



\makeatletter
\DeclareTextCommand{\textprime}{\encodingdefault}{%
  \mbox{$\m@th'\kern-\scriptspace$}%
}
\makeatother


   \def\Dl{\Delta}

\def\al{\alpha}   \def\dl{\delta}
\def\ep{\epsilon}

\def\vsg{\varsigma} \def\vth{\vartheta} 
  \def\Cc{\mathcal{C}} 
\def\Ec{\mathcal{E}} \def\Fc{\mathcal{F}} \def\Gc{\mathcal{G}} 
  \def\Kc{\mathcal{K}} 
 \def\Nc{\mathcal{N}}  \def\Pc{\mathcal{P}}
   
 \def\Vc{\mathcal{V}}  
 

\def\I{\mathbb{I}}   
 \def\N{\mathbb{N}}  
 \def\R{\mathbb{R}}


 \def\JB{\mathbf{J}}  
\def\MB{\mathbf{M}}   
   
  \def\WB{\mathbf{W}} \def\XB{\mathbf{X}}
 \def\ZB{\mathbf{Z}}
 \def\bB{\mathbf{b}}  
  \def\gB{\mathbf{g}}

 \def\vB{\mathbf{v}}  \def\xB{\mathbf{x}}
\def\yB{\mathbf{y}} \def\zB{\mathbf{z}}




   \def\pb{\bar{p}}

 \def\zb{\bar{z}}
   

  

 \def\ft{\tilde{f}} 

 \def\zt{\tilde{z}}



\def\zs{z^*}
\def\xs{x^*}

\def\xBt{\tilde{\mathbf{x}}}

\def\xBb{\bar{\mathbf{x}}}

\def\piB{\mathbf{\pi}}
\def\piBt{\tilde{\mathbf{\pi}}}

\def\vf{\varphi}

\def\Lmax{L^{\max}}


\def\gBb{\bar{\mathbf{g}}}

\usepackage{stackengine}
\stackMath
\newcommand\tsup[2][2]{%
 \def\useanchorwidth{T}%
  \ifnum#1>1%
    \stackon[-.5pt]{\tsup[\numexpr#1-1\relax]{#2}}{\scriptscriptstyle\sim}%
  \else%
    \stackon[.5pt]{#2}{\scriptscriptstyle\sim}%
  \fi%
}

\usepackage{accents}
\newlength{\dhatheight}





\def\rU{\subset}

\def\uU{\cup}

\def\rUe{\subseteq}

\def\sm{\setminus}
\def\buU{\bigcup}
\def\bdU{\bigcap}
\def\gd{\nabla}

\def\es{\enspace}
\def\ra{\rightarrow}
\def\la{\leftarrow}

\def\1{\mathbf{1}}
\def\0{\mathbf{0}}
\def\eqsp{{\hspace{1.35em}}}
\DeclareMathOperator*{\argmin}{argmin} 
\DeclareMathOperator*{\argmax}{argmax} 

\newcommand{\rn}[1]{\textup{\lowercase\expandafter{\romannumeral#1}}}


\def\bms{\begin{bmatrix}}
\def\bme{\end{bmatrix}}
\def\beq{\begin{equation}}
\def\eeq{\end{equation}}
\def\bal{\begin{equation}\begin{aligned}}
\def\eal{\end{aligned}\end{equation}}
\def\bals{\begin{equation*}\begin{aligned}}
\def\eals{\end{aligned}\end{equation*}}

\newcommand\numberthis{\addtocounter{equation}{1}\tag{\theequation}}

\usepackage[normalem]{ulem}
\usepackage[toc,page,header]{appendix}
\usepackage{times}
\date{}

\title{Nonlinear Consensus for Distributed Optimization}
\author{Hsu Kao and Vijay Subramanian
\thanks{Hsu Kao is with the Department of Electrical Engineering and Computer Science, University of Michigan, Ann Arbor, MI 48109 USA (e-mail: hsukao@umich.edu).}
\thanks{Vijay Subramanian is with the Department of Electrical Engineering and Computer Science, University of Michigan, Ann Arbor, MI 48109 USA (e-mail: vgsubram@umich.edu).}
}

\begin{document}

\maketitle

\begin{abstract}
Distributed optimization algorithms have been studied extensively in the literature; however, underlying most algorithms is a linear consensus scheme, i.e. averaging variables from neighbors via doubly stochastic matrices. We consider nonlinear consensus schemes with a set of \emph{time-varying} and \emph{agent-dependent} monotonic Lipschitz nonlinear transformations, where we note the operation is a \emph{subprojection} onto the consensus plane, which has been identified via stochastic approximation theory. For the proposed nonlinear consensus schemes, we establish convergence when combined with the NEXT algorithm \cite{Scutari_NEXT_2016}, and analyze the convergence rate when combined with the DGD algorithm \cite{Nedic_DistSubgrad_2009}. In addition, we show that convergence can still be guaranteed even though column stochasticity is relaxed for the gossip of the primal variable, and then couple this with nonlinear transformations to show that as a result one can choose any point within the ``shrunk cube hull" spanned from neighbors' variables during the consensus step. We perform numerical simulations to demonstrate the different consensus schemes proposed in the paper can outperform the traditional linear scheme.
\end{abstract}

\def\keywords{\vspace{.5em}\small
{\textbf{Keywords}:\,\relax%
}}
\def\endkeywords{\par}

\begin{keywords}
Network consensus, distributed optimization, nonlinear transformation, stochastic approximation
\end{keywords}

\section{Introduction}\label{sec:1}
The network consensus problem, which concerns the convergence behavior of a multi-agent network agreeing on the value of a variable, is widely studied \cite{Bauso_Cons_2006,Boyd_Average_2004,Tsitsiklis_Cons_2006,ElChamie_Cons_2015}. In the problem, each node in the underlying directed or undirected network is given an arbitrary initial value, and the nodes exchange their values through the network links to reach a consensus. The literature on the network consensus problem has focussed on the following: choosing the consensus weight matrix to maximize the convergence speed \cite{Boyd_Average_2004,ElChamie_Cons_2015}, analysis of the dependence of the convergence rate on the number of nodes and spectral radius of the weight matrix \cite{Tsitsiklis_Cons_2006}, and the conditions for convergence of different concensus schemes using stability analysis \cite{Bauso_Cons_2006}. Whereas consensus schemes have broad applications, they are critical to \emph{distributed optimization}. Nearly all distributed optimization algorithms are composed of two core steps: the ``descent step" and the ``consensus step." The bulk of the literature on distributed optimization studies a multitude of schemes for the descent process, including primal/dual methods with various acceleration schemes for different types of communication graphs and time delays, and focuses on convergence analysis for different types of objective functions, step schedules, etc \cite{Yang_DistOptSurvey_2019,Zheng_DistOptSurvey_2022}. However, most algorithms use simple linear consensus for the consensus step, so that the role of the consensus scheme in the context of distributed optimization has received less attention. Common criteria for distributed optimization algorithms converging to optimality include diminishing deviation from mean and deviation to optimum, both in decision variable and objective function. The study of a general class of consensus schemes can not only provide speed-up in convergence, but also greater flexibility in selecting time-dependent, agent-dependent, or other criteria focused averaging processes based on designer's preferences.

On the other hand, originating from the seminal work of \cite{Robbins_SA_1951}, the theory of \emph{stochastic approximation} studies the convergence behavior of some prototypical iterative processes that solve certain problems with noisy data samples; common problems that fall into the category include root-finding problems, fixed-point iterations, and optimization problems \cite{Borkar_SABook_2008}. Using the ordinary differential equation (ODE) approach \cite{Meerkov_ODEApproach_1972a,Meerkov_ODEApproach_1972b}, the theory characterizes asymptotic behavior of the \emph{discrete} processes as tracking trajectories of ODEs, which are \emph{continuous}. The concept of \emph{two time-scale} stochastic approximation is proposed in \cite{Borkar_TwoTime_1997}. Here, the fast process and the slow process are coupled together; however, due to different time-scale, the slow process can view the fast process as quasi-static (reaching equilibrium), so that the combined process asymptotically tracks the slow process with iterates confined in the invariant set of the fast process. This concept is extremely useful as it subsumes many important algorithms in the field of optimization, machine learning, and reinforcement learning. In particular, many distributed optimization methods are two time-scale stochastic approximation algorithms \cite{Borkar_SABook_2008}, with the slow process being the descent process, and the fast process being linear subprojections onto the \emph{consensus plane}, i.e. the hyperplane of the ensemble variable where all local variables agree. From this perspective, \cite{Borkar_Gossip_2016} extends the subprojection part in a gossip-based distributed stochastic approximation scheme to nonlinear operations satisfying certain regulatory conditions, and characterizes the limiting ODEs. Another example of two time-scale stochastic approximation algorithms is \cite{Borkar_Project_2018}, where distributed projections are considered in the fast process; this is hard to capture via the nonlinear operations in \cite{Borkar_Gossip_2016}, but is useful when projections onto the intersection of the constraint sets are infeasible. For yet another example, concentration bounds for stochastic approximation algorithms with contractive maps are derived with application to convergence behavior of value functions in reinforcement learning in \cite{Borkar_RLConSA_2021}.

While most distributed optimization algorithms simply alternate between the descent step and the consensus step with each step chosen in an alternating manner, another line of work studies the optimal frequency between them and analyzes the convergence rate specifically for the distributed optimization setting \cite{Wei_MultCons_2018,Wei_MultCons_2021}. In particular, unlike in the literature where a local optimization step (e.g. a local gradient descent step) is performed immediately followed by taking average once intermittently, \cite{Wei_MultCons_2018,Wei_MultCons_2021} suggest multiple applications of the ``descent operator" followed by multiple applications of the ``consensus operator" with certain schedules for convergence speed-up.

In this paper, we aim to answer the following questions: what general class of consensus schemes would be appropriate to be adopted for distributed optimization algorithms? In addition, can we establish convergence to the optimum for these schemes, and contrast their convergence behavior with linear consensus schemes? We start with a class of nonlinear consensus schemes that exploit \emph{nonlinear transformations}, where linear averages are taken after the transformations; we argue why this enriched class could converge faster than linear schemes in pure network consensus problems. Then, enlightened from the nonlinear gossip result of \cite{Borkar_Gossip_2016}, we identify that this class works as a more general form of subprojections onto the consensus plane, and hence can lead to convergent algorithms when placed as the consensus part of distributed optimization algorithms from the perspective of stochastic approximation theory: we establish convergence in the case of the NEXT algorithm \cite{Scutari_NEXT_2016}, and provide convergence rate analysis (in addition to ensuring convergence) in the case of the DGD algorithm \cite{Nedic_DistSubgrad_2009}. We further relax the column stochasticity requirement commonly assumed with the necessary row stochasticity in the literature, which leads to more flexible provably convergent algorithms: we again establish convergence for NEXT, and provide a similar counterpart algorithm for DGD with gradient tracking variables, with its convergence left as future work. Finally, we combine the two ideas to obtain an even more general class of consensus schemes and provide its convergence with NEXT. This more general consensus class along with the family of $p$-means transformations allows us to choose any point in the ``shrunk cube hull" (defined in \pref{sec:5.1}) that ranges from element-wise minimum to element-wise maximum in the consensus step. Our analysis of the convergence of these algorithms along with some numerical examples demonstrates that the convergence rate critically depends on the relation between the consensus scheme and the gradient direction. We then use this as motivation to propose algorithms that \emph{align consensus steps with negative gradient directions}.

Our contribution is threefold. First, we propose two levels of generalizations for consensus schemes in distributed optimization algorithms: one is taking \emph{time-varying} and \emph{agent-dependent} element-wise nonlinear transformations, and the other is relaxing column stochasticity; we combine the two generalizations to conclude that one can choose any point within the shrunk cube hull in the consensus step, which endows great flexibility. Second, we show the convergence of these generalizations can be guaranteed through the example of NEXT, and derive their effect on convergence rate through the example of DGD. Third, we manifest how these generalizations lead to faster convergence through numerical examples, and point out that the better performance comes from the relation between initial variables, objective functions (hence gradient directions), and consensus schemes. Note that although \cite{Borkar_Gossip_2016} provides general results regarding gossip-based distributed stochastic approximation schemes with nonlinear gossip, this work is not subsumed in their framework because of the following. To begin with, we allow \emph{time-varying} and \emph{agent-dependent} subprojections, implying that the nonlinear transformations in our schemes can be varied at every step (using past information). Moreover, with the column stochasticity relaxation, we allow choosing any point in the shrunk convex/cube hull, which cannot be captured by any nonlinear operations in \cite{Borkar_Gossip_2016}, and is not seen in the literature as well. The considered convergence results are also different: \cite{Borkar_Gossip_2016} focuses on almost surely convergence to an internally chain transitive invariant set using the ODE approach, while we study guaranteed convergence to optimum/stationary points.

The paper is organized as follows. In \pref{sec:2} we describe the setting as well as some terminologies for later usage. In \pref{sec:3.1}, we study the convergence behavior of nonlinear consensus schemes that use nonlinear transformations in a pure consensus problem setting (without coupling with optimization); then we show the convergence of NEXT and DGD when these nonlinear transformations are used in the consensus step. We then show the convergence of NEXT without column stochasticity in \pref{sec:4.1}, while the convergence of the DGD algorithm counterpart with gradient tracked by another variable is conjectured in \pref{sec:4.2}. Combining the nonlinear transformation with column stochasticity relaxation, we further enlarge the possible choices for the next iterate in nonlinear consensus schemes from a ``shrunk convex hull" described in \pref{sec:4} to a ``shrunk cube hull" in \pref{sec:5.1}, and propose algorithms that align the consensus step direction to the negative total gradient in \pref{sec:5.2}. Simulations for the numerical examples and conclusion are given in \pref{sec:6} and \pref{sec:7}, respectively.

\section{Models and Assumptions}\label{sec:2}
We describe the standard distributed optimization setup in \pref{sec:2.1}, then present two distributed optimization algorithms -- the most basic DGD algorithm \cite{Nedic_DistSubgrad_2009} and the NEXT algorithm that can handle non-convex objective functions \cite{Scutari_NEXT_2016}, in \pref{sec:2.2} and \pref{sec:2.3}, respectively. Lastly, the consensus plane and the projections onto it are defined in \pref{sec:2.4}, which will be used frequently in this paper.

\subsection{Distributed Optimization Setup}\label{sec:2.1}
In this subsection, we give the system model of distributed optimization and assumptions. Consider a network $\Nc=\{1,\dots,n\}$ that consists of $n$ nodes. We aim to solve an optimization problem of the form
\beq\label{eq:2.1}
\min_{\xB\in\Kc}\quad F(\xB)=\sum_{i=1}^nf_i(\xB),
\eeq
where all $f_i$'s are $C^1$ smooth but can be non-convex. The goal is to let these nodes cooperatively solve the problem in a distributed fashion. Therefore, each $j\in\Nc$ maintains a copy of the entire decision variable $\xB$, referred to as $\xB_j$. Then \eqref{eq:2.1} is equivalent to solving the optimization problem
\beq\label{eq:2.2}
\min_{\xB_j\in\Kc}\quad\sum_{i=1}^nf_i(\xB_j)
\eeq
at each $j\in\Nc$ subject to the constraint that all nodes agree on their optimal choices, i.e., we enforce
\beq\label{eq:2.3}
\xB_1=\xB_2=\dots=\xB_n.
\eeq
In the context of distributed optimization, node $i$ only has the information of $f_i$. It would require communication between the nodes to solve the problem in \eqref{eq:2.2}-\eqref{eq:2.3}.

Below are the standard assumptions on the objective functions and the constraint set.\\
{\bf Assumption A}\\
{\bf (A1)} The set $\Kc\in\R^d$ is closed and convex;\\
{\bf (A2)} $U$ is coercive, that is, $\lim_{\xB\in\Kc,|\xB|\ra\infty}U(\xB)=\infty$; based on this we can effectively assume that $\mathcal{K}$ is compact;\\
{\bf (A3)} $f_i$'s have bounded gradients, i.e. $\exists\es B$ s.t. $\|\gd f_i(\xB)\|\leq B$ for all $\xB\in\Kc$;\\
{\bf (A4)} $f_i$'s have Lipschitz gradients, i.e. $\exists\es L_i$ s.t. $\|\gd f_i(\xB)-\gd f_i(\yB)\|\leq L_i\|\xB-\yB\|$ for all $\xB,\yB\in\Kc$.

The set of nodes $\Nc$ along with a set of undirected edges $\Ec$ form a graph $\Gc=(\Nc,\Ec)$. This graph captures how communications take place -- node $i$ and $j$ can only communicate if $(i,j)\in\Ec$. The communication graph $\Gc$ is assumed to be connected and simple to foster communication between the nodes; otherwise, the problem is generally unsolvable. Moreover, the graph $\Gc$ is associated with a symmetric doubly stochastic matrix $\WB\in\R^{n\times n}$ such that $\|\WB-\frac{1}{n}\1\1^\top\|_2<1$, and that $W_{ij}>0$ only if $(i,j)\in\Ec$ with the exception that $W_{ii}>0$ is allowed even if we assume there is no self-loop.

In the following, we introduce the DGD algorithm and the NEXT algorithm that will later be used to demonstrate the proposed nonlinear consensus algorithms. Note that our algorithms are \emph{meta-schemes} that work with primal distributed optimization algorithms, not just DGD and NEXT. We select DGD for the showcase as it is simple, well-known, and easier to analyze for convergence rate. On the other hand, we select NEXT due to its built-in gradient tracking scheme, so that we can easily relax column stochasticity; this is the focus from \pref{sec:4} onwards.

\subsection{The DGD Algorithm}\label{sec:2.2}
The DGD algorithm first proposed in \cite{Nedic_DistSubgrad_2009} can handle non-smooth objective functions with sub-gradient descent. For simpler convergence analysis, we will consider strongly convex and smooth objective functions similar to \cite{Yuan_DGDConv_2016,Wei_MultCons_2018,Jakovetic_ConvRate_2012}. Specifically, we consider the setup given in \pref{sec:2.1} with a few more constraints: we let $\Kc=\R^d$ to save the hassle of projections; also, we assume $f_i$'s are convex and denote $L\triangleq \Lmax=\max_iL_i$ for simplicity. Just as many primal distributed optimization algorithms, in DGD node $i$ maintains a local copy of the decision variable, and updates the copy by taking the convex combination of the copies in its neighbor set and subtracting its current gradient. The DGD algorithm is presented in \pref{alg:2.3}.

\setcounter{AlgoLine}{0}
\begin{algorithm}
\caption{DGD Algorithm}\label{alg:2.3}
\nl Initialization: $\xB_i[0]\in\R^d$, $t=0$.\\
\nl \While{$\xB[t]$ does not satisfy the termination criterion}{
\nl $t\la t+1$\\
\nl $\xB_i[t+1]=\sum_jW_{ij}\xB_j[t]-\al[t]\gd f_i(\xB_i[t])$\eqnum\label{eq:2.6}
}
\textbf{Output:} $\xB[t]$
\end{algorithm}

Line 4 is performed for all $i\in\Nc$, and $\al[t]$ is the learning rate at time $t$. The initial copy value $\xB_i[0]$ can be chosen arbitrarily from $\R^d$. With the strong convexity assumption, there is a unique optimal solution $\xB^*$, and the convergence rate results depend on whether weighted running average \cite{Jakovetic_ConvRate_2012, Kao_ConvRate_2019} or last iterate convergence \cite{Yuan_DGDConv_2016} is considered.

\subsection{The NEXT Algorithm}\label{sec:2.3}
The NEXT algorithm is a non-convex distributed optimization algorithm that optimizes convex surrogate functions in each iterate for faster convergence, a technique called successive convex approximation \cite{Scutari_NEXT_2016}. In the NEXT algorithm, each node performs a local convex optimization, and then some information will be exchanged in the network. At a high level, just as many primal distributed optimization algorithms, the first step is the ``descent step" and the second is the ``consensus step;" the two steps are iteratively applied to obtain the solution \cite{Scutari_NEXT_2016}. In the first step of time $t$, the node $i$ solves a convex approximation of the whole objective function by convexizing its own objective function $f_i$ \emph{parametrized by} the current iterate $\xB_i[t]$ to be a strongly convex surrogate function $\ft_i(\bullet;\xB_i[t])$, while linearizing the sum of other nodes' objective functions $\sum_{j\neq i}f_j$. The surrogate function of $f_i$ at iterate $\xB$, denoted as $\ft_i(\bullet;\xB)$, should satisfy the following assumptions:
\\
{\bf Assumption F}\\
{\bf (F1)} $\ft_i(\bullet;\xB)$ is convex;\\
{\bf (F2)} $\gd\ft_i(\xB;\xB)=\gd f_i(\xB)$ for all $\xB\in\Kc$;\\
{\bf (F3)} $\ft_i(\bullet;\xB)$'s are coercive for all $\xB\in\Kc$ and $i\in\Nc$.
\\
With the surrogate function, node $i$ solves the following local convex optimization problem at time $t$
\beq\label{eq:2.4}
\xBt_i(\xB_i[t],\piBt_i[t])=\underset{\xB_i\in\Kc}{\arg\min}\ft_i(\xB_i;\xB_i[t])+\piBt_i[t]^\top(\xB_i-\xB_i[t])+G(\xB_i)
\eeq
where $\piBt_i[t]$ is a variable maintained at node $i$ to approximate $\piB_i[t]\triangleq\sum_{j\neq i}\gd f_j(\xB_i[t])$ for the linearization of others' objectives. Since node $i$ does not have this information, it uses another variable $\yB_i[t]$ to track the average of gradients $\frac{1}{n}\sum_{j=1}^n\gd f_j(\xB_i[t])$ with the gossip of similar variables maintained in all the nodes in the network, and approximates $\piB_i[t]$ with $\piBt_i[t]\triangleq n\cdot\yB_i[t]-\gd f_i[t]$ where we denote $\gd f_i[t]\triangleq\gd f_i(\xB_i[t])$. The NEXT algorithm is given in \pref{alg:2.1}. All operations containing index $i$ are performed for all $i\in\Nc$, i.e., Line 1, 5, 6, 7, 9, 10, and 11.

\setcounter{AlgoLine}{0}
\begin{algorithm}
\caption{NEXT Algorithm}\label{alg:2.1}
\nl Initialization: $\xB_i[0]\in\Kc$, $\yB_i[0]=\gd f_i[0]$, $\piBt_i[0]=n\yB_i[0]-\gd f_i[0]$, $t=0$.\\
\nl \While{$\xB[t]$ does not satisfy the termination criterion}{
\nl $t\la t+1$\\
\nl \textit{Local SCA optimization}: for all $i\in\Nc$\\
\nl $\xBt_i[t]=\xBt_i(\xB_i[t],\piBt_i[t])$\\
\nl Find a $\xB_i^{inx}[t]$ s.t. $\|\xBt_i[t]-\xB_i^{inx}[t]\|\leq\ep_i[t]$\\
\nl $\zB_i[t]=\xB_i[t]+\al[t](\xB^{inx}_i[t]-\xB_i[t])$\\
\nl \textit{Consensus update}: for all $i\in\Nc$\\
\nl $\xB_i[t+1]=\sum_{j=1}^nW_{ij}\zB_j[t]$\\
\nl $\yB_i[t+1]=\sum_{j=1}^nW_{ij}\yB_j[t]+(\gd f_i[t+1]-\gd f_i[t])$\\
\nl $\piBt_i[t+1]=n\cdot\yB_i[t+1]-\gd f_i[t+1]$
}
\textbf{Output:} $\xB[t]$
\end{algorithm}

With Assumption A and Assumption F, the result established in \cite{Scutari_NEXT_2016} is convergence to the stationary points (see Theorem 4 of \cite{Scutari_NEXT_2016}), whose definition is given as follows.
\begin{dff}\label{dff:2.1}
A point $\xB^*$ is a stationary point of Problem \eqref{eq:2.1} if $\gd F(\xB^*)^\top(\yB-\xB^*)\geq0$ for all $\yB\in\Kc$.
\end{dff}

\subsection{Definitions Regarding the Ensemble Vector}\label{sec:2.4}
Recall from \pref{sec:2.1} that each node $i\in[n]$ maintains a copy of the decision variable $\xB_i\triangleq[x_{i,1}\es\cdots\es x_{i,d}]^\top\in\R^d$. We denote the ensemble vector consisting of the copies from all the nodes as $\XB\triangleq[\xB_1^\top\es\cdots\es \xB_n^\top]^\top\in\R^{nd}$, and the dimension $l$ part of the ensemble as $\XB_l=[x_{1,l}\es\cdots\es x_{n,l}]^\top\in\R^n$.

\begin{dff}\label{dff:3.1}
The consensus plane $\Cc$ contains all the ensemble vectors such that the variables in all the nodes agree, i.e.
\[
\Cc\triangleq\{\XB=[\xB_1^\top\es\cdots\es\xB_n^\top]^\top\in\R^{dn}:\xB_1=\cdots=\xB_n\}.
\]
\end{dff}

\begin{dff}[from \cite{Borkar_Gossip_2016}]\label{dff:3.2}
A projection $\Pc$ onto a set $\Cc$ is a function such that $\Pc(\Pc(\xB))=\Pc(\xB)$ and $\Pc(\xB)\in\{\xB:\Pc(\xB)=\xB\}$ for all $\xB$. A function $\Fc$ is called a projection subroutine of $\Pc$ if the following holds: (1) $\Fc$ is continuous; (2) $\lim_{n\ra\infty}\Fc^n=\Pc$ uniformly; (3) $\Pc(\Fc(x))=\Fc(\Pc(x))=\Pc(x)$. We will also call this $\Fc$ a subprojection onto $\Cc$.
\end{dff}

\section{Nonlinear Consensus Schemes}\label{sec:3}
From the perspective of stochastic approximation, the linear consensus update is a special type of subprojection onto the consensus plane whereas the ``consensus variable" $\xB_c$, which is any type of ``mean" of $\XB$ (e.g. $\frac{1}{n}\sum_j\xB_j$), descends to the optimum; the reader may refer to \cite{Kao_LocalApprox_2021} (NEXT) and \cite{Borkar_Gossip_2016,Borkar_SABook_2008} (DGD) for analysis from this perspective. Enlightened from the result of \cite{Borkar_Gossip_2016}, which broadens the types of subprojections that can be used, in this section we study nonlinear subprojection methods which transform variables into another domain where linear averages are taken, in particular the family of $(p,w)$-means. We first study the convergence behavior of nonlinear subprojections to the consensus plane in \pref{sec:3.1}, then in \pref{sec:3.2} we describe the combinations of the nonlinear subprojections with NEXT and DGD.

\subsection{Nonlinear Subprojection by Transformation}\label{sec:3.1}
The Line 9 in \pref{alg:2.1} can be seen as a type of subprojection to the consensus plane $\Cc:=\{\XB=[\xB_1^\top\es\cdots\es\xB_n^\top]^\top\in\R^{dn}:\xB_1=\cdots=\xB_n\}$. From the viewpoint of \cite{Borkar_Gossip_2016}, a subprojection $\Fc(\cdot)$ is a function such that under infinitely many applications, the iterate will end up in $\Cc$ for any starting point $\XB[0]$ (see \pref{dff:3.2}). In the distributed optimization setting, in each iteration, a node $i$ takes account of the information from its neighborhood $\{\xB_j:j\in Nb(i)\}$ where $Nb(i):=N(i)\uU\{i\}$ and $N(i):=\{j:(i,j)\in\Ec\}$ and projects that to a ``sub-consensus plane" $\Cc_i:=\{\XB=[\xB_1^\top\es\cdots\es\xB_n^\top]^\top\in\R^{dn}:\xB_j=\xB_k\es\forall\es j,k\in Nb(i)\}$. The connectedness assumption of the communication graph $\Gc$ guarantees that the intersection of these sub-consensus planes $\bdU_i\Cc_i$ is $\Cc$. In \pref{alg:2.1} and \pref{alg:2.3}, for node $i$ this is done by taking a convex combination of all its neighbors' information $\XB_i=[\xB_j]_{j\in Nb(i)}$, which can be thought of as finding the point in $\Cc_i$ that minimizes the weighted sum of squares of distances from the point to each element in $\XB_i$; i.e., the next iterate after the projection subroutine is
\[
\sum_{j\in Nb(i)}W_{ij}\xB_j[t]=\argmin_{\xB\in\R^d}\sum_{j\in Nb(i)}W_{ij}\|\xB-\xB_j[t]\|^2.
\]
We refer to this as the linear consensus scheme.

Now, we consider nonlinear consensus schemes. For simplicity, in this subsection let us consider the one-dimension case ($d=1$) and focus on repeated projection subroutines onto the consensus plane (instead of the distributed optimization context where the descent steps and consensus steps are interlaced together) first. In \cite{Tikhomirov_Mean_1991}, it is proved that under certain ``axioms of mean," the mean of $x_1,\cdots,x_n$ should take the form of $M(x_1,\cdots,x_n)=\vf^{-1}\left(\frac{\vf(x_1)+\cdots+\vf(x_n)}{n}\right)$ where $\vf$ is a continuous increasing function. Since we do not require the symmetry property (i.e., $M(x_1,\cdots,x_n)$ does not have to be a symmetric function), and want to include the weights $W_{ij}$ in the average expression, we consider
\beq\label{eq:3.10}
x_i[t+1]=\vf^{-1}\left(\sum_jW_{ij}\vf(x_j[t])\right),
\eeq
with $\vf$ being a strictly monotonic bijective function on $\R$. Note that node $i$ can only use the information from its neighbors due to the communication constraint imposed by the network, which is automatically applied in \eqref{eq:3.10} via the constraint on $\WB$. If we start with $\xB[0]=[x_1[0]\es\cdots\es x_n[0]]^\top$ and repeatedly apply \eqref{eq:3.10}, then this consensus scheme will converge to
\beq\label{eq:3.11}
x^*=\vf^{-1}\left(\frac{1}{n}\sum_{i=1}^n\vf(x_i[0])\right):=\chi(\xB[0]).
\eeq
We can rewrite each iteration in \eqref{eq:3.10} as $z_i[t+1]\triangleq\vf(x_i[t+1])=\sum_jW_{ij}\vf(x_j[t])$, which simply means taking convex combinations of $z_j\triangleq\vf(x_j)$'s. That is to say, this is a linear consensus scheme in the \textit{transformed domain} of $\vf$. Since the infinite product of $\WB$ converges to $\frac{1}{n}\1_n\1_n^\top$, i.e. $\lim_{t\ra\infty}\WB^t=\frac{1}{n}\1_n\1_n^\top$ \cite{Boyd_Average_2004}, the iterations converge to $\zs=\frac{1}{n}\sum_{i=1}^nz_i[0]$ where $z_i[0]=\vf(x_i[0])$ and from the invertibility of $\vf$ we can find an $\xs$ such that $\zs=\vf(\xs)$.

If we limit the domain of consideration to $\R^+$, a common choice of $\vf$ is $\vf(x)=x^p$:
\beq\label{eq:3.12}
x_i[t+1]=\left(\sum_jW_{ij}x_j^p[t]\right)^{1/p},
\eeq
which is called the $(p,w)$-mean in \cite{Lemmens_NonlinPF_2012}, or mean of order $p$ when $\WB$ is uniform in \cite{Bauso_Cons_2006}. Arithmetic, geometric, and harmonic means fall into this category, corresponding to $p=1$, $p\ra0$, and $p=-1$ when $\WB$ is uniform, respectively. When $p\ra\infty$ and $p\ra-\infty$, \eqref{eq:3.12} also corresponds to the max operation $x_i[t+1]=\max_{j\in Nb(i)}x_j[t]$ and the min operation $x_i[t+1]=\min_{j\in Nb(i)}x_j[t]$ in the support, respectively \cite{Lemmens_NonlinPF_2012}. It is natural to ask whether this consensus scheme leads to faster projection onto $\Cc$, and whether it could be used to increase the convergence speed of distributed optimization algorithms.

We discuss the first question in more depth here, while the second one is studied later in this paper. Note that the ``sample variance" in the transformed domain $\sum_{i=1}^n(z_i[t]-\zs)^2=\sum_{i=1}^n[\vf(x_i[t])-\vf(\zs)]^2$ decreases over time \cite{Nedic_Dist_2009}, and so does $V[t]=\sum_{i=1}^n(x_i[t]-\xs)^2$ due to the monotonicity of $\vf$. Therefore, one reasonable way to evaluate the convergence speed of repeated subprojections to $\Cc$ is by how many iterations it takes for this $V[t]$ to decrease from its initial value to a fraction of $\ep$. On one hand, observe that the max/min operation ($p=\pm\infty$) only needs $d(\Gc)$ (the diameter of $\Gc$) iterations to project any point onto $\Cc$; on the other hand, for linear consensus scheme ($p=1$), it takes infinitely many iterations for $V[t]$ to go down to 0, as $V[t]$ is obtained by geometrically decreasing $\|\WB^t-\frac{1}{n}\1_n\1_n^\top\|_F^2$. It is then evident that, for small $\ep$ and large $p$, we have $T_{\ep,1}>T_{\ep,p}$, where $T_{\ep,p}$ is the $T_\ep$ for taking $p$-mean; that is, there exists a $p$-mean scheme converging faster than linear consensus in the pure consensus setting considered here.

For $\ep$ not small enough and the corresponding $T$ not large enough, this may not be true. In \pref{app:1}, we give an example where the one step decrease in sample variance of linear scheme is greater than that of max operation. However, we find from simulations that the ratio of sample variances given any $T$ monotonically decreases with $p$ ($p\geq1$) when $\xB[0]$ is uniformly distributed. This suggests that $p$-mean with a larger $p$ tends to project faster onto $\Cc$.

\subsection{Distributed Optimization with Nonlinear Transformation}\label{sec:3.2}
As we mentioned earlier, from the stochastic approximation viewpoint, many primal distributed optimization algorithms, such as NEXT and DGD, interleave the ``consensus steps" at a faster time-scale with the "descent steps" at a slower time-scale, where the consensus steps are subprojections onto the consensus plane $\Cc$ \cite{Kao_LocalApprox_2021,Borkar_Gossip_2016,Borkar_SABook_2008}. From \pref{sec:3.1}, we know that nonlinear subprojection by transformation is also a subprojection onto $\Cc$, which is often faster than the linear consensus scheme. Thus, it is of interest to study the convergence behavior of algorithms that combine the nonlinear subprojection with the original ``descent part" of a primal distributed optimization algorithm.

\subsubsection{NEXT with Nonlinear Transformation}\label{sec:3.2.1}
We consider taking element-wise transformations, where in each element the transformation function is strictly monotonic and bi-Lipschitz (and hence bi-continuous). The strict monotone property is to ensure the inverse transformation always exists.

\begin{dff}\label{dff:3.6}
Denote $\vf_{i,t}:\Kc\ra\Kc$ such that $\vf_{i,t}(\xB)=\begin{bmatrix}\vf_{i,t,1}(x_1)&\cdots&\vf_{i,t,d}(x_d)\end{bmatrix}^\top$ where $\xB=\begin{bmatrix}x_1&\cdots&x_d\end{bmatrix}^\top$ and $\{\vf_{i,t,l}\}_{l=1}^d$ are all one-dimensional functions. Note the three indices in the subscript of $\vf_{i,t,l}$ are the agent index, the time index, and the dimension index, respectively. Also, for any transformation $\vf:\Kc\ra\Kc$, define $\vf(S):=\{\vf(\xB):\xB\in S\}$.
\end{dff}

In the following revised version of NEXT, in each time step and for each agent we take an element-wise transformation of the intermediate result from the local optimization step, apply the doubly-stochastic matrix, and take the inverse element-wise transformation. It is easy to see that the combination of the three operations is a subprojection onto the consensus plane. Note that for different time steps and for different agents we could take different transformations, so that they could be chosen in an online manner depending on all the past information, giving us more flexibility.

\begin{algorithm}
\caption{NEXT with Nonlinear Transformation}\label{alg:3.1}
In Line 9 of \pref{alg:2.1}: {\footnotesize $\xB_i[t+1]=\vf_{i,t}^{-1}\left(\sum_{j=1}^nW_{ij}\vf_{i,t}(\zB_j[t])\right)$}
\end{algorithm}

We establish the same convergence result (i.e. to stationary points) as for NEXT \cite{Scutari_NEXT_2016} in the following theorem.

\begin{thm}\label{thm:3.7}
Let $\{\xB[t]\}_t\triangleq\{(\xB_i[t])_{i=1}^n\}_t$ be the sequence generated by \pref{alg:3.1}. Assume $\vf_{i,t,l}$ and $\vf_{i,t,l}^{-1}$ are strictly monotonic Lipschitz continuous functions with constants $L_+$ and $L_-$, respectively, for all $i\in[n]$, $t\in\{0\}\uU\N$ and $l\in[d]$. Under Assumption A and Assumption F in \pref{sec:2}, all sequences $\{\xB_i[t]\}_t$ asymptotically agree, and their limit points are stationary points of the original problem.
\end{thm}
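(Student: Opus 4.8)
The plan is to reduce the analysis of Algorithm~\ref{alg:3.1} to that of the original NEXT algorithm by showing that the nonlinear transformation step is, uniformly in time and agent, a legitimate subprojection onto $\Cc$ that perturbs the consensus error only by a bounded multiplicative factor. The key observation is the one already made in \pref{sec:3.1}: writing $\zB^{\vf}_j[t] := \vf_{i,t}(\zB_j[t])$, Line~9 of \pref{alg:3.1} is exactly a linear doubly-stochastic average $\sum_j W_{ij}\zB^\vf_j[t]$ in the transformed coordinates, followed by applying $\vf_{i,t}^{-1}$. So the plan is to track the standard NEXT Lyapunov quantities --- the consensus error $\|\xB_i[t]-\xBb[t]\|$ where $\xBb[t]=\frac1n\sum_j\xB_j[t]$, the gradient-tracking error $\|\yB_i[t]-\frac1n\sum_j\gd f_j[t]\|$, and the descent of $U$ along the averaged iterate --- and show that each contraction/descent inequality from \cite{Scutari_NEXT_2016} survives up to constants depending only on $L_+,L_-$, and $\|\WB-\frac1n\1\1^\top\|_2$.

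First I would make precise the ``subprojection'' claim: for fixed $i,t$, the map $\Fc_{i,t}(\XB) := \vf_{i,t}^{-1}\!\big(\sum_j W_{ij}\vf_{i,t}(\xB_j)\big)$ (applied rowwise) is continuous, fixes $\Cc$ pointwise, maps into a set whose closure under iteration is $\Cc$ (because $\WB^t\to\frac1n\1\1^\top$ and $\vf_{i,t}$ is a bijection), and commutes with $\Pc_\Cc$ in the sense of \pref{dff:3.2}; this is \emph{per step}, and since the transformations vary with $t$ the relevant fact is the quantitative one, not the limiting one. The quantitative fact I would extract is a \emph{uniform contraction of the consensus error}: componentwise, $|\vf_{i,t,l}(a)-\vf_{i,t,l}(b)|\le L_+|a-b|$ and the inverse is $L_-$-Lipschitz, so
\[
\big\|\xB_i[t+1]-\xBb[t+1]\big\|
\;\le\; L_+L_-\,\big\|\,\textstyle\sum_j W_{ij}(\zB_j[t]-\bar\zB[t])\big\|
\;\le\; L_+L_-\,\rho\,\max_j\|\zB_j[t]-\bar\zB[t]\|,
\]
where $\rho=\|\WB-\frac1n\1\1^\top\|_2<1$ and $\bar\zB[t]=\frac1n\sum_j\zB_j[t]$; one needs to be slightly careful that $\xBb[t+1]$ is the \emph{arithmetic} mean of the $\xB_i[t+1]$, not the transformed mean, so the cleanest route is to bound $\max_i\|\xB_i[t+1]-\xB_k[t+1]\|$ pairwise (via $\vf_{i,t}^{-1}$ Lipschitzness applied to two agents that share the transformation $\vf_{i,t}$ --- actually they do not, so one bounds $\|\xB_i[t+1]-\bar\zB[t]\|\le L_- \|\sum_j W_{ij}\vf_{i,t}(\zB_j[t]) - \vf_{i,t}(\bar\zB[t])\|\le L_-L_+\rho\max_j\|\zB_j[t]-\bar\zB[t]\|$ and then compare $\bar\zB[t]$ to $\xBb[t+1]$ by a triangle inequality, all terms being controlled by the consensus radius). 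This gives a contraction factor $L_+L_-\rho$; \textbf{this is exactly where the main obstacle lies}, because $L_+L_-\ge 1$ always (it equals $1$ only for affine $\vf$), so $L_+L_-\rho<1$ is \emph{not} guaranteed, and one cannot close the recursion as in linear NEXT.

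To handle this I would exploit the two-time-scale structure rather than a single-step contraction: over $\tau$ consecutive consensus-only steps the linear part contributes $\rho^\tau$, which beats the fixed blow-up $L_+L_-$ once $\tau$ is large enough; but in NEXT there is exactly one consensus step per descent step, so instead the argument must be that the \emph{descent/perturbation} terms are themselves $O(\al[t])$ and $\al[t]\to0$, so the consensus error obeys a recursion $e[t+1]\le L_+L_-\rho\, e[t] + C\,\al[t]$, and \emph{this} is summable-to-convergence precisely when $L_+L_-\rho<1$. So the honest statement requires that contraction hypothesis, and re-reading \pref{thm:3.7} I would either (a) add the standing assumption $L_+L_-\,\rho<1$ (the natural analogue of $\rho<1$ in the linear case --- indeed when $\vf$ is affine this reduces to it), or (b) observe that the Lipschitz constants may be taken arbitrarily close to the local ones and, since $\Kc$ is compact and the transformations monotone, rescale. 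Granting $L_+L_-\rho<1$, the remainder is bookkeeping: (i) the gradient-tracking recursion for $\yB_i$ is \emph{unchanged} by the transformation (Line~10 is still linear), so $\|\yB_i[t]-\tfrac1n\sum_j\gd f_j[t]\|$ is controlled by the same $\rho$-contraction plus $\|\gd f_i[t+1]-\gd f_i[t]\|\le L_i\|\xB_i[t+1]-\xB_i[t]\|\le L_i(O(\al[t]) + O(e[t]))$ exactly as in \cite{Scutari_NEXT_2016}; (ii) the surrogate-optimization descent lemma (Lemma~2--3 of \cite{Scutari_NEXT_2016}) depends only on Assumption~F and the inexactness bound $\ep_i[t]$, not on the consensus mechanism; (iii) plugging the consensus- and tracking-error bounds into the descent of $U(\xBb[t])$ yields, under $\sum\al[t]=\infty$, $\sum\al[t]^2<\infty$, that $\liminf\|\gd U(\xBb[t])\|=0$ and every limit point of $\xBb[t]$ is stationary; (iv) finally $\|\xB_i[t]-\xBb[t]\|\to0$ from the $e[t+1]\le L_+L_-\rho\,e[t]+C\al[t]$ recursion with $\al[t]\to0$, so all agents asymptotically agree and share the same stationary limit points, which is the claim. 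I would present steps (i)--(iv) as a sequence of lemmas mirroring \cite{Scutari_NEXT_2016}, with the only genuinely new ingredient being the transformed-coordinate contraction estimate above.
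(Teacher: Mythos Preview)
You correctly isolate the single new ingredient --- the consensus contraction for the $\xB$-update --- and correctly observe that the $\yB$-tracking recursion and the surrogate descent lemma carry over verbatim from \cite{Scutari_NEXT_2016}. The gap is in the contraction estimate itself. Your one-step bound yields a factor $L_+L_-\rho$ with $\rho=\|\WB-\tfrac1n\1\1^\top\|_2$, and you are right that $L_+L_-\ge 1$ forces you to \emph{assume} $L_+L_-\rho<1$. Neither of your proposed escapes works: option~(a) adds a hypothesis that is not in the theorem; option~(b) (``rescale the Lipschitz constants close to the local ones'') cannot push $L_+L_-$ below any prescribed level, because $L_+L_-\ge 1$ is a structural inequality for any monotone bijection on a nondegenerate interval (equality only for affine maps), and compactness of $\Kc$ does not help. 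So as written your recursion $e[t+1]\le L_+L_-\rho\,e[t]+C\al[t]$ may blow up, and the proof does not close.

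The paper avoids this by abandoning the spectral norm altogether and tracking the coordinatewise \emph{range} $sp(\XB_l[t])=\max_i x_{i,l}[t]-\min_i x_{i,l}[t]$ instead of the $\ell_2$ consensus error. The mechanism is a diameter/minimum-weight argument: let $\vth>0$ be the smallest nonzero entry of $\WB$ and $D=\mathrm{diam}(\Gc)$. For a direct neighbor $j$ of the current max node one shows, using only the Lipschitz bounds on $\vf_{j,t}$ and $\vf_{j,t}^{-1}$, that
\[
x_{j}[t+1]-x_{\min}\;\ge\;\frac{\vth}{L_+L_-}\,(x_{\max}-x_{\min}),
\]
and propagating this $D$ hops gives, for \emph{every} node,
\[
x_j[t+D]-x_{\min}\;\ge\;\Big(\frac{\vth}{L_+L_-}\Big)^{\!D}(x_{\max}-x_{\min}),
\]
with a symmetric upper bound. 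Hence $sp(\XB[t+D])\le\Big[1-2\big(\tfrac{\vth}{L_+L_-}\big)^{D}\Big]\,sp(\XB[t])$, and the bracketed factor lies in $[0,1)$ \emph{for every} $L_+L_-\ge 1$, because $L_+L_-$ sits in the denominator of a strictly positive quantity being subtracted from $1$, not as a multiplier of the contraction rate. Adding the descent perturbation yields $sp(\XB[t])\le \rho^{\lfloor t/D\rfloor}sp(\XB[0])+c\sum_{s\le t}\hat\rho^{t-s}\al[s-1]\to 0$, which is the replacement for your $e[t]$ recursion and feeds into the NEXT bookkeeping exactly as you outlined. The moral: for time-varying, agent-dependent bi-Lipschitz distortions, the $\ell_\infty$/range viewpoint gives an unconditional contraction over $D$ steps, whereas the $\ell_2$/spectral viewpoint incurs a multiplicative $L_+L_-$ per step that need not be absorbed.
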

\begin{proof}
See \pref{app:3}.
\end{proof}

Note that in \pref{alg:3.1}, node $j$ still sends $\zB_j[t]$ to node $i$; the transformation from $\zB_j[t]$ to $\vf_{i,t}(\zB_j[t])$ is taken  at node $i$.

\subsubsection{DGD with Nonlinear Transformation}\label{sec:3.2.2}
In the setting of NEXT, non-convex objectives and convergence to stationary points are considered; hence, it is hard to characterize the convergence rate in the setting. To analyze the effect of nonlinear transformations on convergence rate, we turn to another example that combines the transformations with DGD.

\begin{algorithm}
\caption{DGD with Nonlinear Transformation}\label{alg:3.2}
In Line 4 of \pref{alg:2.3}:\\
$\xB_i[t+1]=\vf_{i,t}^{-1}\left(\sum_{j=1}^nW_{ij}\vf_{i,t}(\xB_j[t]-\al[t]\gd f_j(\xB_j[t])\right)$
\end{algorithm}

\begin{thm}\label{thm:3.8}
Assume $\vf_{i,t,l}$ and $\vf_{i,t,l}^{-1}$ are strictly monotonic Lipschitz continuous functions and have uniformly bounded first derivatives and second derivatives\footnote{We actually only need the existence of the second derivatives of $\vf_{i,t,l}$ and $\vf_{i,t,l}^{-1}$; the boundedness of $\xB_i[t]$ will then ensure the boundedness of the derivatives in the region of interest. We use the same constants for simplicity of notation.}, all with constants $L_+$ and $L_-$, respectively, for all $i\in[n]$, $t\in\{0\}\uU\N$ and $l\in[d]$. Also for all $i\in[n]$, assume the objective functions $f_i$ is strongly convex with constant $\nu$, has gradient uniformly bounded by $B$ (Assumption A3), and has Lipschitz continuous gradient with constant $L_f$ (Assumption A4 with $L_f=\max_iL_i$), which implies the existence of a unique optimal solution $\xB^*$. Suppose we choose a constant learning rate satisfying $\al[t]=\al<\frac{1}{L_f}$. Then the sequence generated by \pref{alg:3.2} $\{\xB[t]\}_t\triangleq\{(\xB_i[t])_{i=1}^n\}_t$ satisfies
\bal\label{eq:3.14}
\frac{n\nu}{2}\|\xBb[t]-\xB^*\|^2&\leq F(\xBb[t])-F(\xB^*)\\
&\leq\bar{\rho}^t\big[F(\xBb[0])-F(\xB^*)\big]+O(\al^2),
\eal
where the factor $\bar{\rho}\triangleq1-\al\nu$ is in $[0,1)$, and $\xBb\triangleq\frac{1}{n}\sum_{i=1}^n\xB_i$ is the average vector. In other words, $\xBb[t]$ converges to an $O(\al)$ neighborhood of $\xB^*$ exponentially fast, while the optimality gap in the mean $F(\xBb[t])-F(\xB^*)$ also decreases exponentially until it settles to a value that is $O(\al^2)$. For the agreement from the nodes, for all $i\in[n]$, the deviation from the mean $\|\xB_i[t]-\xBb[t]\|$ yet again decreases exponentially until reaching $O(\al)$ (see \pref{lem:B.3} for the detailed description).
\end{thm}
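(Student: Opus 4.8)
The plan is to reduce the nonlinear--consensus iteration to an \emph{inexact gradient descent} on the mean variable $\xBb[t]\triangleq\frac1n\sum_i\xB_i[t]$, plus a geometrically contracting deviation--from--mean term, and then invoke the standard strongly convex analysis. Write $\vB_j[t]\triangleq\xB_j[t]-\al\gd f_j(\xB_j[t])$ for the post--descent, pre--consensus iterates, so that \pref{alg:3.2} reads $\xB_i[t+1]=\vf_{i,t}^{-1}\big(\sum_j W_{ij}\vf_{i,t}(\vB_j[t])\big)$. Two structural facts about this step drive everything. First, it is \emph{range--nonexpansive} coordinate-wise: since $\sum_j W_{ij}\vf_{i,t,l}(v_{j,l}[t])$ is a convex combination of the $\vf_{i,t,l}(v_{j,l}[t])$ and $\vf_{i,t,l}$ is monotone, the output $x_{i,l}[t+1]$ lies between $\min_j v_{j,l}[t]$ and $\max_j v_{j,l}[t]$; hence the iterates cannot spread faster than the $O(\al B)$ descent drift, which (via the footnote's remark) keeps the iterates and the relevant derivatives of $\vf_{i,t,l},\vf_{i,t,l}^{-1}$ bounded. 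Second, a second--order Taylor expansion of $\vf_{i,t,l}$ about $\sum_k W_{ik}v_{k,l}[t]$, using $\sum_j W_{ij}\big(v_{j,l}[t]-\sum_k W_{ik}v_{k,l}[t]\big)=0$ and the bounded second derivatives, gives
\[
\xB_i[t+1]=\sum_{j}W_{ij}\vB_j[t]+\eB_i[t],\qquad \|\eB_i[t]\|\le C\,\big(\text{spread of }\{\vB_j[t]\}_j\big)^2,
\]
for a constant $C=C(L_+,L_-)$; i.e.\ the nonlinear scheme \emph{is} a linear consensus step up to a second--order--in--deviation perturbation.

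Next I would derive two coupled recursions. Let $D[t]\triangleq\|\XB[t]-\1\otimes\xBb[t]\|$. Using $\sum_i W_{ij}=1$, the mean evolves as $\xBb[t+1]=\xBb[t]-\al\gd g(\xBb[t])+\rB[t]$ where $g\triangleq\frac1n F$ and $\rB[t]$ collects (i) the gradient mismatch $\frac{\al}{n}\sum_j\big(\gd f_j(\xB_j[t])-\gd f_j(\xBb[t])\big)$, of size $O(\al L_f D[t])$ by Assumption~A4, and (ii) $\bar\eB[t]$, of size $O(D[t]^2+\al^2)$ by the expansion above (since the spread of $\{\vB_j[t]\}_j$ is $O(D[t]+\al B)$). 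For the deviation, projecting the displayed identity onto the orthogonal complement of $\Cc$ and using $\|\WB-\frac1n\1\1^\top\|_2=\sg<1$ yields $D[t+1]\le \sg\,D[t]+O(\al)+O(D[t]^2+\al^2)$; a bootstrap argument — anchored by the range--nonexpansiveness above to preclude blow--up, and valid once $\al$ is small enough that the quadratic term is dominated below a fixed threshold — gives $D[t]\le \rho_c^{\,t}D[0]+O(\al)$ with $\rho_c\in(\sg,1)$, i.e.\ $D[t]\to O(\al)$ geometrically; this is \pref{lem:B.3}. Consequently $\|\rB[t]\|=O(\al D[t]+D[t]^2+\al^2)$, which is $O(\al^3)$ in steady state and decays geometrically from its (larger, $D[0]$--driven) transient.

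Plugging the mean recursion into the strongly convex analysis then finishes: with $g$ being $\nu$--strongly convex and $L_f$--smooth and $\al<1/L_f$, the descent lemma applied to $\xBb[t+1]=\xBb[t]-\al\gd g(\xBb[t])+\rB[t]$ gives $g(\xBb[t+1])\le g(\xBb[t])-\frac{\al}{2}\|\gd g(\xBb[t])\|^2+O\!\big(\|\rB[t]\|(\|\gd g(\xBb[t])\|+\|\rB[t]\|)\big)$, and the PL bound $\|\gd g(\xBb[t])\|^2\ge 2\nu(g(\xBb[t])-g^*)$ yields $g(\xBb[t+1])-g^*\le(1-\al\nu)(g(\xBb[t])-g^*)+e[t]$ with $e[t]=O(\al\|\rB[t]\|+\|\rB[t]\|^2)$. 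Unrolling, multiplying by $n$, writing $\bar\rho=1-\al\nu$, and bounding $\sum_{s}\bar\rho^{\,t-1-s}e[s]$ — the geometric series of the $O(\al^4)$ steady--state errors contributes $O(\al^4)/(\al\nu)=O(\al^3)=O(\al^2)$, while the larger early--$s$ terms decay geometrically and are absorbed into the $\bar\rho^{\,t}[F(\xBb[0])-F(\xB^*)]$ transient — gives exactly $F(\xBb[t])-F(\xB^*)\le \bar\rho^{\,t}[F(\xBb[0])-F(\xB^*)]+O(\al^2)$. The first inequality $\frac{n\nu}{2}\|\xBb[t]-\xB^*\|^2\le F(\xBb[t])-F(\xB^*)$ is just $n\nu$--strong convexity of $F=\sum_i f_i$, and the stated per--node agreement is the $D[t]=O(\al)$ bound of \pref{lem:B.3}.

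The main obstacle is the bootstrap in the second step: the quadratic perturbation $\eB_i[t]$ coming from the nonlinearity is only harmless once $D[t]$ is already small, so one must rule out a blow--up of the deviation starting from an arbitrary (possibly large) $D[0]\in\R$, and it is precisely the coordinate--wise range--nonexpansiveness of the transformation--based consensus step that makes this go through — the iterates cannot spread faster than the $O(\al B)$ drift, and the $\sg$--contraction inherited in the transformed domain eventually dominates. The secondary difficulty is purely bookkeeping: one must track every error (gradient mismatch, $\eB$, $\rB$, $e[t]$) at the correct order in $\al$ so that the geometric sums collapse to $O(\al^2)$ rather than to $O(\al)$.
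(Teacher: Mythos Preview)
Your high–level strategy coincides with the paper's: linearize the nonlinear consensus step by a second–order Taylor expansion, read the mean recursion as an inexact gradient descent on $g=\frac1nF$, and close with the descent lemma plus the Polyak–\L{}ojasiewicz bound. The Taylor computation you sketch (expand $\vf_{i,t,l}$ about the weighted mean so the first–order term cancels by row stochasticity, then expand $\vf_{i,t,l}^{-1}$) is exactly what the paper does in \pref{app:4}, and the use of column stochasticity to kill $\sum_i\big(\sum_k W_{ik}v_{k,l}-v_{i,l}\big)$ in the mean is also the paper's step. The bookkeeping that turns $\|\Dl\xBb[t]+\al\gBb[t]\|=O(\al^2)$ into the final $O(\al^2)$ residual after the geometric sum is likewise the same.

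The substantive divergence—and the place where your argument has a real gap—is the deviation bound (\pref{lem:B.3}). You propose to obtain $D[t+1]\le\sg D[t]+O(\al)+O\big((D[t]+\al)^2\big)$ from the Taylor linearization and then bootstrap. But range \emph{nonexpansiveness} of the consensus step only yields $sp(\XB[t+1])\le sp(\VB[t])\le sp(\XB[t])+2\al B$, which permits linear growth and gives no a~priori bound below the threshold at which the quadratic term $O(D[t]^2)$ is dominated by $(1-\sg)D[t]$; from an arbitrary $D[0]$ your recursion need not contract. Your appeal to ``$\sg$–contraction inherited in the transformed domain'' does not close this either, because the transformations are \emph{agent– and time–dependent}: there is no single transformed domain in which the whole ensemble undergoes a linear $\WB$–contraction.

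The paper avoids the bootstrap entirely by proving range \emph{contraction} directly for the nonlinear scheme, without Taylor expanding. Reusing the argument from \pref{thm:3.7}, it shows that for each coordinate and every $D=\mathrm{diam}(\Gc)$ steps one has $sp(\XB[t+D])\le\rho\,sp(\XB[t])+O(\al)$ with $\rho=1-2\big(\vth/(L_+L_-)\big)^D\in[0,1)$; the Lipschitz constants $L_+,L_-$ enter precisely to transfer the convex–combination bound from each agent's own transformed domain back to the original coordinates. This contraction is valid for \emph{any} starting spread, so \pref{lem:B.3} follows immediately and feeds the $O(\al)$ deviation into the mean analysis. In short: keep your mean–dynamics argument, but replace the spectral–gap–plus–bootstrap route for $D[t]$ by the paper's direct range–contraction (Lipschitz) argument, and the proof goes through.
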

\begin{proof}
See \pref{app:4}.
\end{proof}

In \pref{thm:3.8} and its proof, we see that the convergence rate is in line with the results in the literature \cite{Yuan_DGDConv_2016,Wei_MultCons_2018}. That is, taking nonlinear transformations does not affect the factor $\bar{\rho}$ in the geometric convergence, but leads to larger constants in the expressions of the $O(\al)$ and $O(\al^2)$ neighborhoods. Hence, based on our proof methodology, the nonlinear consensus scheme does not improve the theoretical guarantee of the convergence rate. However, we do see improvements from using various nonlinear consensus schemes in \pref{sec:6}. Some of them arise from the detailed relative relations for the variables exemplified in \pref{sec:5.2} that are hard to be captured by current analysis methods.

\section{Relaxation of Column Stochasticity for Consensus}\label{sec:4}
Many primal distributed optimization algorithms require doubly stochastic matrices for averaging. In contrast, in this section we study relaxing the need of column stochasticity. The NEXT algorithm (as well as many primal algorithms that use the averaging consensus scheme) is essentially performing a two time-scale stochastic approximation \cite{Kao_LocalApprox_2021}, where the algorithm interlaces the descent steps and the consensus steps. In the fast process consisting of the consensus steps, the decision variables maintained in the nodes asymptotically agree as the ensemble vector $\XB=[\xB_1^\top\es\cdots\es\xB_n^\top]^\top$ converges to the consensus plane $\Cc$, which is referred to as ``\emph{consensus convergence}." On the other hand, in the slow process consisting of the descent steps, the consensus vector $\xB_c$ (which is suitably taken as the average vector $\xBb=\frac{1}{n}\sum_{i=1}^n\xB_i$) converges to one of the local minima, which we call ``\emph{aggregate convergence}." Row stochasticity is required for both of these convergences, as it ensures the new iterate stays inside the convex hull spanned by the old iterates during averaging; otherwise, the new iterate could fall outside the feasible region, let alone guaranteeing convergence. On the other hand, column stochasticity is to ensure that the objectives of the nodes weigh in equally, since the overall objective is the sum of them (and hence equivalent to the uniformly weighted average of them); hence, it is necessary for aggregate convergence but not consensus convergence. Here, we split out the two convergences, and relax the column stochasticity requirement for the consensus convergence which leads to more general and flexible consensus schemes.

\subsection{NEXT with Column Stochasticity of Consensus Relaxed}\label{sec:4.1}
We start with the definition of a ``shrunk convex hull."

\begin{dff}\label{dff:3.9}
Let $co(T)$ be the convex hull of a finite set $T$. Also, let $m_S$ be the centroid of a convex set $S$, and $\dl\circ S:=\{(1-\dl)m_S+\dl x:x\in S\}$, i.e. the set obtained by shrinking $S$ towards its centroid by a factor of $\dl\in[0,1]$. The shrunk convex hull of $T$ with factor $\dl$ is then defined as $\dl\circ co(T)$.
\end{dff}

The inexact NEXT algorithm can be revised by choosing any point inside the shrunk convex hull spanned by the neighboring local optimization results with a factor $\dl$ smaller than $1$, as given in the following algorithm.

\begin{algorithm}
\caption{NEXT with Consensus in Convex Hull}\label{alg:3.3}
In Line 9 of \pref{alg:2.1}:\\
pick any $\xB_i[n+1]\in\dl\circ co(\{\zB_j[t]:j\in Nb(i)\})$
\end{algorithm}

\begin{thm}\label{thm:3.10}
Let $\{\xB[t]\}_t\triangleq\{(\xB_i[t])_{i=1}^n\}_t$ be the sequence generated by \pref{alg:3.3}. Assume $\dl\in[0,1)$. Under Assumption A and Assumption F, all sequences $\{\xB_i[t]\}_t$ asymptotically agree, and their limit points are stationary points of the original problem.
\end{thm}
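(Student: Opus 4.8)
The plan is to extend the convergence analysis behind \pref{thm:3.7} (hence of NEXT \cite{Scutari_NEXT_2016}), which rests on two pillars --- \emph{consensus convergence} of the local copies $\xB_i[t]$ and \emph{aggregate convergence} of a suitable average of them to a stationary point --- and to carry both pillars through the weaker averaging primitive used by \pref{alg:3.3}. The first step is to put the algorithm in algebraic form: any admissible update $\xB_i[t+1]\in\dl\circ co(\{\zB_j[t]:j\in Nb(i)\})$ (\pref{dff:3.9}) can be written $\xB_i[t+1]=\sum_{j}A_{ij}[t]\,\zB_j[t]$ where $A[t]$ is row stochastic, has $A_{ij}[t]=0$ for $j\notin Nb(i)$, and --- crucially, this is where $\dl\in[0,1)$ enters --- satisfies $A_{ij}[t]\ge(1-\dl)/n=:\eta>0$ for every $j\in Nb(i)$ and every $t$, because the centroid of $co(\{\zB_j[t]:j\in Nb(i)\})$ places weight at least $1/|Nb(i)|\ge1/n$ on each neighbor and shrinking toward it by $\dl<1$ retains at least a $(1-\dl)$-fraction of that weight. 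Two consequences will be used repeatedly: Lines~10--11 of \pref{alg:2.1} are untouched, so $\yB_i$ and $\piBt_i$ still use the doubly stochastic $\WB$ and NEXT's gradient tracking is intact; and every iterate stays in $\Kc$, since $\zB_j[t]$ is a convex combination of $\xB_j[t]\in\Kc$ and $\xB_j^{inx}[t]$ (within $\ep_j[t]$ of $\xBt_j[t]\in\Kc$) and $\xB_i[t+1]$ is then a convex combination of the $\zB_j[t]$ --- using convexity (A1) and coercivity (A2) of $\Kc$.

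For consensus convergence I would prove $\max_{i,k}\|\xB_i[t]-\xB_k[t]\|\to0$. Coordinatewise the primal step reads $\XB_l[t+1]=A[t]\XB_l[t]+\al[t]\,\bB_l[t]$ with $\bB_l[t]$ uniformly bounded ($\Kc$ compact) and $\al[t]\to0$. Since $\Gc$ is connected with finite diameter $D$ and each $A[t]$ has positive diagonal with all $Nb(\cdot)$-entries at least $\eta$, the product of any $D$ consecutive matrices has all entries $\ge\eta^D$, hence is scrambling, so the coordinate spread contracts by a uniform factor $\rho<1$ over every window of $D$ steps up to an $O(\al[t])$ perturbation; this geometrically stable recursion driven by a vanishing input gives $\max_{i,k}\|\xB_i[t]-\xB_k[t]\|\to0$, and in fact $=O(\al[t])$ under the NEXT step-size schedule.

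For the aggregate I would \emph{not} track the uniform mean --- which $A[t]$ no longer preserves --- but the time-varying weighted average $\xB_c[t]:=\sum_j v_j[t]\,\xB_j[t]$, where $v[t]$ is the probability vector defined by $\1\,v[t]^\top=\lim_{T\to\infty}A[T]A[T-1]\cdots A[t]$ (the limit exists by the same scrambling estimate, $v[t]$ has all entries in $[\eta^D,1]$, and $v[t]^\top=v[t+1]^\top A[t]$). This is what compensates for the missing column stochasticity: using $v_j[t]=\sum_k v_k[t+1]A_{kj}[t]$ one obtains the bias-free recursion $\xB_c[t+1]=\xB_c[t]+\al[t]\sum_j v_j[t]\,(\xB_j^{inx}[t]-\xB_j[t])$, while $\|\xB_i[t]-\xB_c[t]\|\le\max_{i,k}\|\xB_i[t]-\xB_k[t]\|\to0$. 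Since $\yB$ and $\piBt$ still use $\WB$, consensus convergence together with (A4) give $\yB_i[t]-\tfrac1n\gd F(\xB_c[t])\to0$ and $\piBt_i[t]-\sum_{j\neq i}\gd f_j(\xB_c[t])\to0$, so each local SCA step is asymptotically a best response to $\xB_c[t]$. From here the argument mirrors that of \pref{thm:3.7}: by (F1)--(F3) the direction $\sum_j v_j[t](\xB_j^{inx}[t]-\xB_j[t])$ is, asymptotically, a descent direction for $F+G$ at $\xB_c[t]$; the merit function $F(\xB_c[t])+G(\xB_c[t])$ decreases up to summable errors (the inexactness $\ep_i[t]$ with $\sum_t\al[t]\ep_i[t]<\infty$ as in NEXT, and the consensus and tracking errors being $O(\al[t])$, hence of total weight $O(\al[t]^2)$ after the factor $\al[t]$), is bounded below, hence converges; consequently the best-response residuals $\|\xBt_i[t]-\xB_i[t]\|$ vanish, and every limit point of $\{\xB_c[t]\}$ --- equivalently of each $\{\xB_i[t]\}$, since they agree asymptotically --- satisfies the variational inequality of \pref{dff:2.1} and is a stationary point of \eqref{eq:2.1}.

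The main obstacle is the bookkeeping forced by relaxing column stochasticity. One must (a) replace the uniform mean by the correct invariant object, the time-varying weight $v[t]$, and show it is a genuine probability vector with uniformly positive entries --- which is exactly where the hypothesis $\dl<1$ (via the lower bound $\eta$) and connectedness of $\Gc$ are used; (b) verify that $v[t]$ renders the $\xB_c[t]$-recursion bias-free; and (c) re-derive the NEXT merit/descent estimates with $\xB_c[t]$ and the $v[t]$-weighted direction in place of the uniform average. A secondary but essential point is that the consensus disagreement must be shown to be $O(\al[t])$, not merely $o(1)$, so that all disagreement- and tracking-induced error terms remain summable after multiplication by $\al[t]$; this too rests on the uniform contraction rate $\rho<1$, which exists precisely because $\dl<1$.
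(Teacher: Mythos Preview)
Your proposal is correct and follows the same high-level route as the paper: rewrite each admissible update as $\xB_i[t+1]=\sum_jA_{ij}[t]\zB_j[t]$ with a time-varying row-stochastic matrix $A[t]$ whose nonzero entries are uniformly bounded below by $(1-\dl)/n>0$ (this is precisely where $\dl<1$ enters), obtain consensus convergence from the resulting geometric contraction, and then rerun the NEXT analysis with the uniform mean $\xBb[t]$ replaced by an appropriately weighted average $\xB_c[t]$. The paper's proof is much terser: it cites Wolfowitz to obtain a \emph{fixed} limit vector $\tilde{\zB}$ with $\WB^x[t]\cdots\WB^x[1]\to\1_n\tilde{\zB}^\top$ and asserts that the NEXT proof goes through with $\xBb[t]$ replaced by $\sum_i\zt_i\xB_i[t]$ and $\tfrac{1}{n}\1\1^\top$ by $\1\tilde{\zB}^\top$. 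Your choice of the \emph{time-varying} absolute probability sequence $v[t]$ satisfying $v[t]^\top=v[t+1]^\top A[t]$ is in fact the more careful construction: it is exactly this compatibility relation that delivers the bias-free recursion $\xB_c[t+1]=\xB_c[t]+\al[t]\sum_jv_j[t](\xB_j^{inx}[t]-\xB_j[t])$, whereas a fixed $\tilde{\zB}$ need not satisfy $\tilde{\zB}^\top A[t]=\tilde{\zB}^\top$ for genuinely time-varying $A[t]$, so the paper's substitution implicitly relies on additional consensus-error terms being absorbed. One minor point to tighten: your stated lower bound $v_j[t]\ge\eta^D$ should be justified (it follows from standard results on absolute probability sequences for chains with uniformly positive diagonals and bounded connectivity), and you do need strict positivity of the $v_j[t]$ so that the $v[t]$-weighted SCA direction inherits the descent property of each local direction.
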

\begin{proof}
See \pref{app:5}.
\end{proof}

In short, the above theorem says that the NEXT algorithm can work well with $\xB$ and $\yB$ using different consensus matrices, and the consensus matrices for $\xB$ do not have to be column stochastic. Column stochasticity is, however, crucial for the consensus matrices for $\yB$, since it ensures $\yB$ tracks the uniformly weighted average of $\gd f_i$'s, and hence $\xB$ converges to the correct optima of $\sum_if_i$ instead of a non-uniformly weighted version.

\subsection{DGD with Gradient Tracking and Column Stochasticity of Consensus Relaxed}\label{sec:4.2}
In the DGD algorithm, on the other hand, the gradient information is gossiped through the averaging of $\xB$. Therefore, relaxing column stochasticity of the averaging matrices could lead to convergence to the optima of a non-uniformly weighted version of the objective functions $f_i$'s. One approach that allows DGD to enjoy the flexibility of column stochasticity relaxation is through tracking the gradient information with another variable $\yB$ \cite{Notarstefano_DistSurvey_2019} given in \pref{alg:3.4}, which is still averaged by doubly stochastic matrices, while $\xB$ is averaged by possibly non-column-stochastic matrices.

\setcounter{AlgoLine}{0}
\begin{algorithm}
\caption{DGD with Gradient Tracking}\label{alg:3.4}
\nl Initialization: $\xB_i[0]\in\R^d$, $\yB_i[0]=\gd f_i(\xB_i[0])$, $t=0$.\\
\nl \While{$\xB[t]$ does not satisfy the termination criterion}{
\nl $t\la t+1$\\
\nl $\xB_i[t+1]=\sum_jW_{ij}^x\xB_j[t]-\al[t]\yB_i[t]$\\
\nl $\yB_i[t+1]=\sum_jW_{ij}^y\yB_j[t]+\gd f_i(\xB_i[t+1])-\gd f_i(\xB_i[t])$
}
\textbf{Output:} $\xB[t]$
\end{algorithm}

To ensure convergence to correct optima of $\sum_if_i$, the matrix $\WB^y$ has to be doubly stochastic while the matrix $\WB^x$ only has to be row stochastic. The detailed convergence analysis of this algorithm is left as future work.

\section{Combining Nonlinear Transformation and Column Stochasticity Relaxation}\label{sec:5}
Recall that in \pref{sec:3.1} we argued that the max and min operations are generally faster than linear consensus schemes -- it only takes the diameter of $\Gc$ steps for the operations to reach exact consensus. A natural question then is how will primal distributed optimization algorithms perform when the consensus is reached through these operations, and can we establish convergence guarantees for such schemes. For the former question, we do find in some numerical examples in \pref{sec:6} that algorithms with max and min operations used for consensus converge faster than those with linear consensus schemes. We are hence motivated to study the latter question by investigating the nonlinear transformations of the shrunk convex hull, a combination of the ideas from \pref{sec:3.2} and \pref{sec:4}, with NEXT algorithm as the example.

\begin{algorithm}
\caption{NEXT with Nonlinear Transformation and Column Stochasticity Relaxation}\label{alg:3.5}
In Line 9 of \pref{alg:2.1}:\\
pick any $\xB_i[t+1]\in\vf_{i,t}^{-1}(\dl\circ co(\{\vf_{i,t}(\zB_j[t]):j\in Nb(i)\}))$
\end{algorithm}

\begin{thm}\label{thm:3.11}
Let $\{\xB[t]\}_t\triangleq\{(\xB_i[t])_{i=1}^n\}_t$ be the sequence generated by \pref{alg:3.5}. Assume $\vf_{i,t,l}$ and $\vf_{i,t,l}^{-1}$ are strictly monotonic Lipschitz continuous functions with constants $L_+$ and $L_-$, respectively, for all $i\in[n]$, $t\in\{0\}\uU\N$ and $l\in[d]$. Also assume $\dl\in[0,1)$, Assumption A, and Assumption F. Then all sequences $\{\xB_i[t]\}_t$ asymptotically agree, and their limit points are stationary points of the original problem.
\end{thm}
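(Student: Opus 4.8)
The plan is to show that \pref{alg:3.5} is once again a valid subprojection onto the consensus plane $\Cc$, and then invoke the same two time-scale stochastic approximation machinery used to prove \pref{thm:3.7} and \pref{thm:3.10}. The key observation is that the consensus update in \pref{alg:3.5} is a composition of three maps: the element-wise transformation $\vf_{i,t}$, a selection of a point in the shrunk convex hull $\dl\circ co(\{\vf_{i,t}(\zB_j[t]):j\in Nb(i)\})$, and the inverse transformation $\vf_{i,t}^{-1}$. By \pref{thm:3.10}, the middle step — picking any point in the shrunk convex hull of neighbors' variables — is a contraction toward $\Cc$ when $\dl\in[0,1)$, because it can be written as left-multiplication by a row-stochastic matrix with a uniformly positive spectral gap (the $\dl$-shrinkage guarantees the gap is bounded below uniformly in $t$ and in the choice of point). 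Conjugating this contraction by the bi-Lipschitz element-wise transformations only rescales distances by factors lying in $[L_-^{-1}L_+^{-1}, L_+L_-]$, so the composed map is still a strict contraction toward $\Cc$ with a uniform rate, i.e. a subprojection in the sense of \pref{dff:3.2}.

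The main steps I would carry out: (1) Fix a dimension $l\in[d]$ and work with the scalar ensemble $\XB_l\in\R^n$, since all operations are element-wise. Write the shrunk-convex-hull selection at node $i$ as $\sum_{j\in Nb(i)}\Wt_{ij}[t]\,\vf_{i,t,l}(z_{j,l}[t])$ for some row-stochastic weight vector $\Wt_{i\cdot}[t]$ whose support is $Nb(i)$ and whose entries are bounded below by $\dl/|Nb(i)|$ away from... more precisely, show the induced matrix $\Wt[t]$ satisfies $\|\Wt[t] - \tfrac1n\1\1^\top\|$-type contraction on the subspace orthogonal to $\1$, uniformly over $t$ and over the (adversarial, possibly online) choice of points — this uses connectedness of $\Gc$ exactly as in \pref{sec:3.1} and the argument already in \pref{app:5}. (2) Use the Lipschitz bounds to transfer this to a decrease of $V[t]=\sum_{i}(x_{i,l}[t]-\xBb_l[t])^2$ (or the appropriate consensus-error Lyapunov function), showing it shrinks geometrically between descent steps, which establishes consensus convergence. (3) For aggregate convergence, note that $\yB$ is still averaged by the doubly stochastic $\WB$ exactly as in the original NEXT, so the gradient-tracking variable still tracks $\tfrac1n\sum_i\gd f_i$; hence the slow-timescale ODE for the consensus variable $\xB_c$ is unchanged, and the descent argument of \cite{Scutari_NEXT_2016} (via the surrogate-function Assumption F and Assumption A) goes through verbatim to give convergence of limit points to stationary points.

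The hard part will be Step (1): establishing a \emph{uniform} contraction rate for the family of maps $\XB_l \mapsto (\text{shrunk-hull selections after } \vf, \text{ then } \vf^{-1})$ when the points in the shrunk hull are chosen adversarially and online at every step. Unlike the fixed-matrix case, here the effective weight matrix $\Wt[t]$ depends on the iterate and on the designer's choice, so one cannot simply take a fixed spectral radius $<1$. The resolution is that the $\dl$-shrinkage toward the centroid forces every admissible $\Wt[t]$ to have all entries on $Nb(i)$ bounded below by a constant depending only on $\dl$ and the maximum degree, and any such family of row-stochastic matrices respecting the connected graph $\Gc$ has a common contraction coefficient (e.g. via a Hajnal/Dobrushin coefficient argument over products of length $d(\Gc)$). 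Combined with the bi-Lipschitz distortion bound $L_+L_-$, this yields the needed uniform rate, after which the rest is a routine adaptation of the proofs of \pref{thm:3.7} and \pref{thm:3.10}; I would therefore defer the detailed bookkeeping to \pref{app:5} and the surrogate-descent estimates to the cited NEXT analysis.
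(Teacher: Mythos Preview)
Your proposal is correct and follows essentially the same route as the paper: write the shrunk-hull selection as multiplication by a row-stochastic matrix with nonzero entries bounded below (because of the $\dl$-shrinkage toward the centroid), and then reuse the contraction-of-range argument from the proof of \pref{thm:3.7} together with the bi-Lipschitz distortion factor $L_+L_-$.

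Two remarks where you overcomplicate or slightly misstate things. First, what you call ``the hard part'' (Step~(1)) is already done in the proof of \pref{thm:3.7}: that proof works coordinate-wise with the range $sp(\XB_l[t])$ and uses only that $\WB$ is row-stochastic with nonzero entries bounded below by some $\vth>0$; it never uses column stochasticity or time-invariance of $\WB$. The paper's proof of \pref{thm:3.11} is therefore a one-line observation: since $\dl<1$ forces every admissible weight vector to satisfy $W_{ij}\ge(1-\dl)/|Nb(i)|$ on $Nb(i)$, the hypotheses of the \pref{thm:3.7} argument are met verbatim with $\vth=(1-\dl)/n$, and no new contraction estimate is needed. (Note the correct lower bound is $(1-\dl)/|Nb(i)|$, not $\dl/|Nb(i)|$.) Second, your phrasing ``$\|\Wt[t]-\tfrac{1}{n}\1\1^\top\|$-type contraction'' and the Lyapunov function $V[t]=\sum_i(x_{i,l}[t]-\xBb_l[t])^2$ are not the right objects once column stochasticity is dropped: the mean $\xBb$ is no longer preserved by the consensus step, and the infinite product of row-stochastic matrices converges to $\1\tilde{\zB}^\top$ for some non-uniform $\tilde{\zB}$. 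The paper sidesteps this entirely by tracking $sp(\XB_l[t])$, which shrinks under any row-stochastic matrix with uniformly positive entries regardless of what the limiting left eigenvector is; you should do the same rather than appeal to a spectral gap around $\tfrac{1}{n}\1\1^\top$.
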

\begin{proof}
The result follows by combining the proofs of \pref{thm:3.7} and \pref{thm:3.11}. In fact, the proof of \pref{thm:3.7} essentially applies here too since we do not require the $\WB$ matrices to be column stochastic nor time-invariant in the proof. The only requirements regarding $\WB$ are the assumptions in \pref{sec:2.1}, which is ensured by $\dl<1$.
\end{proof}

\subsection{Enlarging to Cube Hull with $p$-means Transformations}\label{sec:5.1}
One reason that the max operation sometimes outperforms linear consensus is it has larger ``step size" in the consensus step. Indeed, it is common that the outcome of the max operation falls outside the convex hull spanned by the neighboring iterates. In general, performing the max operation can lead to local variables falling outside $\Kc$; however, with additional constraint on $\Kc$, e.g. $\Kc=\R_+^d$, we can further expand the ``hull of consensus choices" to the \emph{cube hull} defined below using the family of $p$-means transformations to include the max and min operations. The main idea is to take the union of all the convex hulls transformed by any transformation inside the family. The theory developed here serves as an application and hence a special case of \pref{thm:3.11}.

\begin{dff}\label{dff:3.12}
The cube hull $cb(T)$ of a finite set $T\rU\R^d$, is defined as the smallest $d$-dimensional cube that contains $T$ with all edges parallel to the Cartesian axes. That is, if we write $T=\{\xB_1,\dots,\xB_n\}$, then $cb(T):=\{\yB:\min_ix_{i,l}\leq y_l\leq\max_ix_{i,l}\es\forall\es i\in[n]\text{ and }l\in[d]\}$.
\end{dff}

Recall that $\vf_{i,t}$ can be chosen at time $t$ dependent on all the past information. Suppose we have a family of transformations $\Phi$. \pref{alg:3.5} then implies that we can choose $\xB_i[t+1]$ from an even bigger set $\buU_{\vf\in\Phi}\vf^{-1}(\dl\circ co(\{\vf(\zB_j[t]):j\in Nb(i)\}))$. Let us take $\Phi$ to be the family of power of $p$ functions $\{\hat{\vf}_p:p\in\R\sm\{0\}\}$ where $\hat{\vf}_p(x)=x^p$ with set $\Kc=\R_+^d$.

\begin{fact}\label{fact:3.13}
Given any finite set $T\in\R_+^d$ and $\dl'\in[0,1)$, there exist a $\pb<\infty$ and $\dl\in(0,1)$ such that $\dl'\circ cb(T)\rUe\buU_{\vf\in\Phi_{\pb}}\vf^{-1}(\dl\circ co(\{\vf(T)\}))$, where $\Phi_{\pb}$ is defined as the family of power functions $\{\hat{\vf}_p:p\in[-\pb,\pb]\sm\{0\}\}$.
\end{fact}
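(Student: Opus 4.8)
The plan is to exploit the element-wise nature of the transformations. By \pref{dff:3.6} a transformation acts coordinate by coordinate, so I read the ``family of power functions'' $\Phi$ as the set of all maps $\vf=(\vf_1,\dots,\vf_d)$ with $\vf_l:x\mapsto x^{p_l}$ for some $p_l\neq0$, and $\Phi_{\pb}$ as those with every $|p_l|\le\pb$; this per-coordinate freedom is what makes the statement true, since with one common exponent the right-hand side is at most $|T|$-dimensional and cannot contain a full-dimensional $cb(T)$. Write $T=\{x_1,\dots,x_n\}$ and $cb(T)=\prod_{l=1}^d[a_l,b_l]$ with $a_l=\min_i x_{i,l}$, $b_l=\max_i x_{i,l}$. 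For orientation I would first note the ``easy'' containment $\vf^{-1}(co(\vf(T)))\rUe cb(T)$ for every such $\vf$: since $x\mapsto x^{p_l}$ and its inverse are monotone, the $l$-th coordinate of $\vf^{-1}(co(\vf(T)))$ sweeps exactly $[a_l,b_l]$. Hence $cb(T)$ is the most one could cover, and the Fact is the opposite-type inclusion, up to the prescribed shrinkage.

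The core is the monotone interpolation of power means. Fix the uniform weights $1/n$. For a coordinate $l$ with $a_l<b_l$, the power mean $p\mapsto\big(\tfrac{1}{n}\sum_i x_{i,l}^p\big)^{1/p}$ is continuous and strictly increasing on $\R$, tending to $a_l$ as $p\to-\infty$ and to $b_l$ as $p\to+\infty$ (the max/min operations are exactly these limits, as recalled in \pref{sec:3.1}); hence it is a homeomorphism of $\R$ onto $(a_l,b_l)$. Given any $y$ in the interior of $cb(T)$, I choose for each $l$ the unique $p_l$ with $\big(\tfrac{1}{n}\sum_i x_{i,l}^{p_l}\big)^{1/p_l}=y_l$ (for the trivial coordinates $a_l=b_l$, which force $y_l=a_l$, just take $p_l=1$) and let $\vf$ be the resulting element-wise power map. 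By the very choice of $p_l$ we have $y_l^{p_l}=\tfrac{1}{n}\sum_i x_{i,l}^{p_l}$ for all $l$, i.e. $\vf(y)=\tfrac{1}{n}\sum_i\vf(x_i)\in co(\vf(T))$, so $y\in\vf^{-1}(co(\vf(T)))$.

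It remains to make $\pb$ and $\dl$ uniform over $\dl'\circ cb(T)$. If $y\in\dl'\circ cb(T)$ then, the centroid of the box being its center, $y_l$ lies in the compact subinterval $\big[a_l+\tfrac{(1-\dl')(b_l-a_l)}{2},\;b_l-\tfrac{(1-\dl')(b_l-a_l)}{2}\big]$ of $(a_l,b_l)$; by the homeomorphism the associated exponents $p_l$ then stay in a compact subset of $\R$, so I take $\pb$ to be a common bound, giving $\vf\in\Phi_{\pb}$. For the shrink factor, $\vf(y)=\tfrac{1}{n}\sum_i\vf(x_i)$ is the vertex-average of $\vf(T)$: if the centroid in \pref{dff:3.9} is the mean of the defining points this equals $m_{co(\vf(T))}$ and lies in $\dl\circ co(\vf(T))$ for every $\dl\in(0,1)$; if it is the center of mass of the polytope, the vertex-average still lies in the relative interior of $co(\vf(T))$ at a distance from its relative boundary bounded below uniformly over the compact family $\{|p_l|\le\pb\}$, so $\vf(y)\in\dl\circ co(\vf(T))$ once $\dl<1$ is chosen close enough to $1$. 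Fixing such a $\dl$ (which is exactly what \pref{thm:3.11} needs), every $y\in\dl'\circ cb(T)$ lies in $\vf^{-1}(\dl\circ co(\vf(T)))$ for the corresponding $\vf\in\Phi_{\pb}$, proving the Fact.

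The step I expect to be fiddliest is the exclusion of $p_l=0$ from $\Phi$: a required exponent vanishes precisely when $y_l$ equals the geometric mean $\big(\prod_i x_{i,l}\big)^{1/n}$, i.e. on a finite union of hyperplanes meeting $cb(T)$. This is a technicality rather than a real obstruction: replace the uniform weights by an interior weight vector $w=w(y)$ of the simplex with $y_l\neq\prod_i x_{i,l}^{w_i}$ for every $l$ --- the offending $w$ form a null subset of the simplex --- and rerun the argument with the $w$-weighted power mean; the only extra cost is slightly more care in the compactness bookkeeping, since one must keep $w$ bounded away from the bad hyperplanes while still controlling the exponents and the combination point. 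Lower-dimensional degeneracies of $co(\vf(T))$ for particular $\vf$ are harmless provided one works with relative interiors throughout. The only genuinely load-bearing ingredient is the power-mean interpolation property; the rest is continuity and compactness.
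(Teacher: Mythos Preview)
The paper does not actually prove \pref{fact:3.13}; it states the claim and then illustrates a single two-dimensional instance via \pref{fig:3.1} and \pref{fig:3.2}, where transformed convex hulls with $(p_1,p_2)\in\{\pm 10\}^2$ are seen to reach the four corners of $cb(T)$, and the caption of \pref{fig:3.2} simply asserts that the shrunk cube is contained in the pictured union. Your proposal therefore does not parallel any argument in the paper---it supplies the justification the paper omits.

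Your route is the natural one and is correct in substance: invert the power-mean map coordinate by coordinate so that $\vf(y)$ lands on the vertex average of $\vf(T)$, then use compactness of $\dl'\circ cb(T)$ (and continuity of the inverse power-mean map) to make $\pb$ and $\dl$ uniform. Your reading of $\Phi_{\pb}$ as allowing an independent exponent in each coordinate is exactly what the paper intends, as the figures use $\vf_{p_1,p_2}$ with $p_1\neq p_2$. Two small comments. First, the parenthetical claim that a single common exponent yields something ``at most $|T|$-dimensional'' is only an obstruction when $|T|\le d$; since the statement is supposed to hold for arbitrary finite $T$, it is safer to just point to the figures for the per-coordinate interpretation rather than argue it is forced. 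Second, in your $p_l=0$ workaround you must keep the perturbed weight vector $w$ inside a \emph{fixed} compact subset of the open simplex (independent of $y$); otherwise the ``uniformly interior'' claim that underlies the choice of a single $\dl<1$ does not go through, since combinations with weights approaching the boundary of the simplex can approach the relative boundary of $co(\vf(T))$. Once that bookkeeping is done, the argument closes.
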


\begin{figure}
\centering
\subfloat[$p_1=10,p_2=10$]{
\includegraphics[scale=0.4]{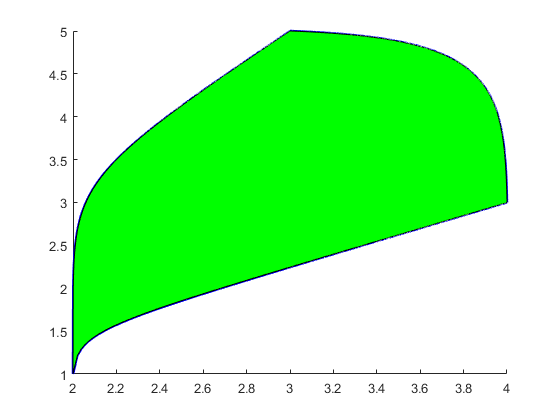}}
\subfloat[$p_1=10,p_2=-10$]{
\includegraphics[scale=0.4]{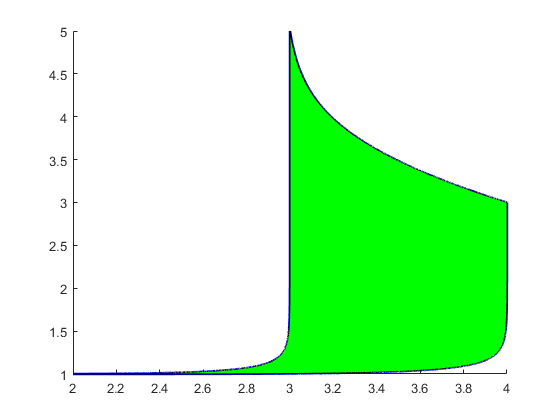}}\\
\subfloat[$p_1=-10,p_2=10$]{
\includegraphics[scale=0.4]{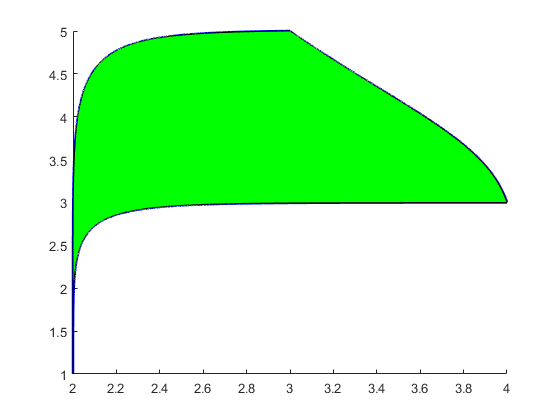}}
\subfloat[$p_1=-10,p_2=-10$]{
\includegraphics[scale=0.4]{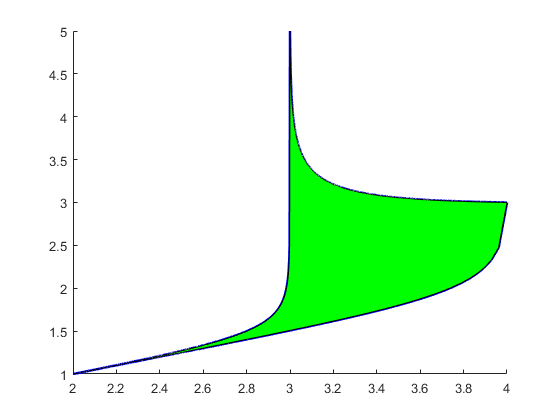}}
\caption{An example of transformed convex hull with two-dimensional element-wise $p$-power functions $\vf_{p_1,p_2}(\xB)=[\hat{\vf}_{p_1}(x_1)\es\hat{\vf}_{p_2}(x_2)]^\top$.}
\label{fig:3.1}
\end{figure}

\pref{fig:3.1} gives an example of ``transformed convex hull" $\vf^{-1}(co(\{\vf(T)\}))$ using the family of power functions. In the example, we let $T=\{[2\es1]^\top,[3\es5]^\top,[4\es3]^\top\}$, and $\vf_{p_1,p_2}(\xB)=[\hat{\vf}_{p_1}(x_1)\es\hat{\vf}_{p_2}(x_2)]^\top$. The four sub-figures depict the region of the transformed hull with four different values of $(p_1,p_2)$. To get a better sense of their union, we plot them together in \pref{fig:3.2}. From the figure, we can indeed see that $0.8\circ cb(T)$ is contained in the union of the transformed hulls with $\vf$ coming from $\{\vf_{p_1,p_2}:p_1,p_2\in[-10,10]\sm\{0\}\}$\footnote{Note that the blank region in the bottom left corner in \pref{fig:3.2} can be filled by slowly changing $(p_1,p_2)$ from $(1,1)$ to $(10,-10)$.}.

\begin{figure}
\centering
\includegraphics[scale=0.5]{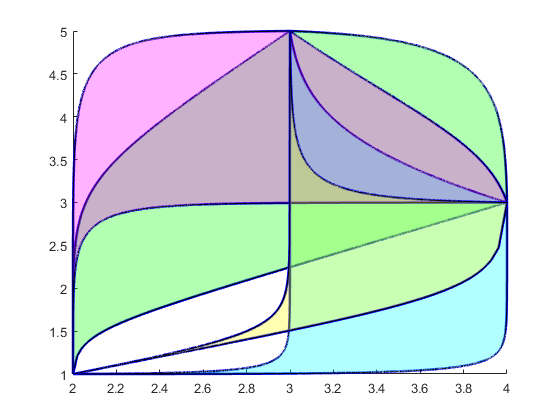}
\caption{The four regions in \pref{fig:3.1} plotted in different colors in one graph.}
\label{fig:3.2}
\end{figure}

By \pref{fact:3.13}, the convergence of \pref{alg:3.5}, and the fact that all functions in $\Phi_{\pb}$ as well as their inverses are Lipschitz continuous, we can choose any $\xB_i[t+1]$ from the ``shrunk cube hull" supported by neighbors' variables, given in \pref{alg:3.6}. Its convergence is then ensured by \pref{thm:3.11}.

\begin{algorithm}
\caption{NEXT with Consensus in Cube Hull}\label{alg:3.6}
In Line 9 of \pref{alg:2.1}:\\
pick any $\xB_i[n+1]\in\dl\circ cb(\{\zB_j[t]:j\in Nb(i)\})$
\end{algorithm}

\vspace{-10pt}
\subsection{Application: Gradient-Oriented Consensus Schemes}\label{sec:5.2}
In the numerical simulations presented in \pref{sec:6}, NEXT with max and min consensus schemes can converge either faster or slower than the linear counterpart depending on the initial values $\{\xB_i[0]\}_{i\in[n]}$. When the initial values are generally component-wise smaller (resp. larger) than the optimal value, max (resp. min) works better and min (resp. max) works poorly. When the initial values are around the same range as the optimal value, with some coordinates larger and some smaller, then usually linear consensus is better than max and min versions of nonlinear consensus. Since max and min consensus schemes are generally faster subprojections onto the consensus plane, the described phenomena also involve the aggregate convergence part. Recall that the iterates generally descend in directions \textit{similar} to the negative gradient for the part. When the initial values are smaller (resp. larger) than the optimal value, the direction of change of taking max (resp. min) operation is more similar to the negative gradient, and thus it further reduces the objective function. This gives rise to the idea of the ``\textit{gradient-oriented consensus scheme}," where we try to align the consensus steps with the negative gradients within the hull of consensus choices.

We start by considering the hull of consensus choices being the convex hull spanned by the neighboring iterates as in \pref{alg:3.3}. The goal is to choose a point such that the direction of change is similar to the negative \emph{total} gradient. In the distributed setting, a node does not have the information of objectives of other nodes and hence does not know total gradient. Fortunately, NEXT uses the $\yB$ variable to track the average gradient, which we take advantage of in \pref{alg:3.7}.

\begin{algorithm}
\caption{NEXT with Gradient-oriented Consensus in Convex Hull}\label{alg:3.7}
In Line 9 of \pref{alg:2.1}:\\
$\xB_i[t+1]\in\argmax_{\xB\in\dl\circ co(\{\zB_j[t]:j\in Nb(i)\})}\frac{\yB_i[t]^\top(\xB-\zB_i[t])}{\|\xB-\zB_i[t]\|}$
\end{algorithm}

\noindent Basically, \pref{alg:3.7} is doing angle minimization within the convex hull supported by the variables from the neighbors. It is a special case of \pref{alg:3.3} and uses the gradient information. The idea of gradient angle minimization was proposed for gradient descent methods in constrained settings \cite{Zoutendijk_Methods_1960}. One could also consider projecting the gradient or optimizing any other objective function using gradient information within the constraint set $\dl\circ co(\{\zB_j[t]:j\in Nb(i)\})$.

As we enlarge the hull of consensus choices to the shrunk cube hull, we can adopt the same gradient-oriented idea but with the new iterate lying in the constraint set $\dl\circ cb(\{\zB_j[t]:j\in Nb(i)\})$ as in \pref{alg:3.8}.

\begin{algorithm}
\caption{NEXT with Gradient-oriented Consensus in Cube Hull}\label{alg:3.8}
In Line 9 of \pref{alg:2.1}:\\
$\xB_i[t+1]\in\argmax_{\xB\in\dl\circ cb(\{\zB_j[t]:j\in Nb(i)\})}\frac{\yB_i[t]^\top(\xB-\zB_i[t])}{\|\xB-\zB_i[t]\|}$
\end{algorithm}

\noindent Just as \pref{alg:3.7} is a special case of \pref{alg:3.3}, \pref{alg:3.8} is a special case of \pref{alg:3.6} (which is in turn a special case of \pref{alg:3.5}). The convergence of the algorithms are guaranteed by \pref{thm:3.10} and \pref{thm:3.11}. We remark that the cube hull is the largest possible set one could consider, as the notion of mean generally requires that the average lies between min and max \cite{Bauso_Cons_2006}. Finally, our conjecture is the gradient-oriented idea within shrunk cube hull or shrunk convex hull also works for DGD with gradient tracking as explained in \pref{sec:4.2}; the corresponding convergence guarantee and performance evaluation are left as future work.

\section{Simulation Results}\label{sec:6}
In this section, we simulate the convergence of NEXT with numerous consensus schemes proposed in this paper. We assume the underlying graph is the 19 cell wrap-around implementation \cite{Huo_Wrap_2005}, a classic example widely used in simulations for cellular networks. Each BS in the implementation is a node in our graph, and an edge exists between a pair of nodes if and only if the corresponding BSs are neighbors. The underlying graph is a symmetric regular graph with 19 nodes, each with degree 6.

We consider the objective functions having a ``partial dependency structure." This is one scenario when the ordinary linear consensus might not work well. In particular, each node $i$ corresponds to a local variable $\xB^i\in\R_+^{d'}$, and its objective $f_i$ only depends on its neighbors' as well as its own local variables. That is, $f_i$ only depends on $\xB^{Nb(i)}$, the ensemble vector with elements in $\{\xB^j:j\in Nb(i)\}$. We let $d'=2$, so the whole tuple variable $\xB$ lies in $\R_+^{38}$. For all $i$, we consider a convex quadratic objective, where $f_i$ is given as
\beq\label{eq:3.15}
f_i(\xB^{Nb(i)})=\frac{1}{2}{\xB^{Nb(i)}}^\top\MB^\top\MB\xB^{Nb(i)}+\bB^\top\xB^{Nb(i)},
\eeq
$\MB$ is a $14\times14$ matrix with \textit{i.i.d.} elements from $\text{Unif}[-1,1]$, and $\bB$ is a $14\times1$ vector with \textit{i.i.d.} elements from $\text{Unif}[-150,-50]$. The negative choices of $\bB$ is to make the optimal solution in $\R^d$ typically lie in $\R_+^d$. The mean of the optimal solution for each coordinate is around 25. The initial choice of $\xB[0]$ for every node is drawn \textit{i.i.d.} for all coordinates from $\chi^2(k)$ (chi-squared distribution with parameter $k$), where we choose $k=5$, $k=25$, and $k=100$ for our three cases, corresponding to low, median, and high initial values relative to the optimal value. The surrogate function of $f_i$ is formed by direct linearization plus a quadratic regularization term with coefficient $\tau_i=100$ for all $i$. Also let $\al[t]=0.8\cdot(t+1)^{-0.53}$, and $\dl=0.9$.

\begin{figure*}
\centering
\subfloat[$k=5$]{
\includegraphics[scale=0.33]{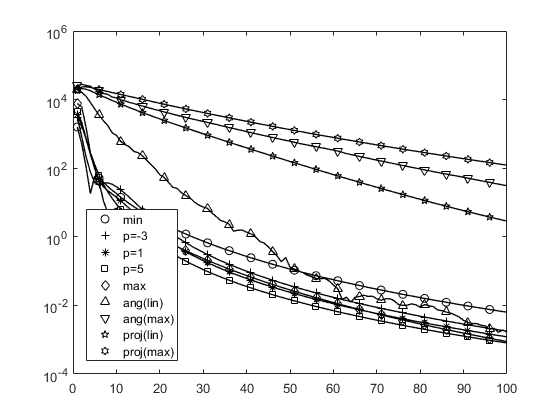}}
\label{fig:sim1a}
\subfloat[$k=25$]{
\includegraphics[scale=0.33]{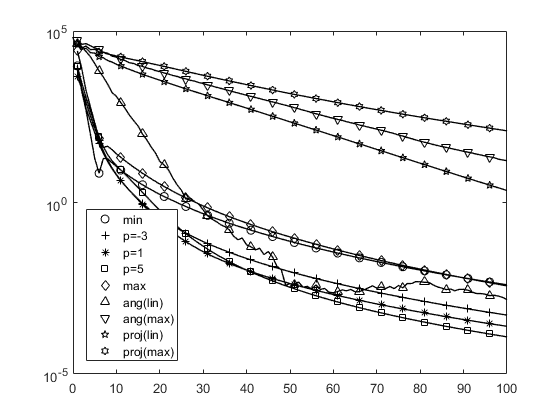}}
\label{fig:sim1b}
\subfloat[$k=100$]{
\includegraphics[scale=0.33]{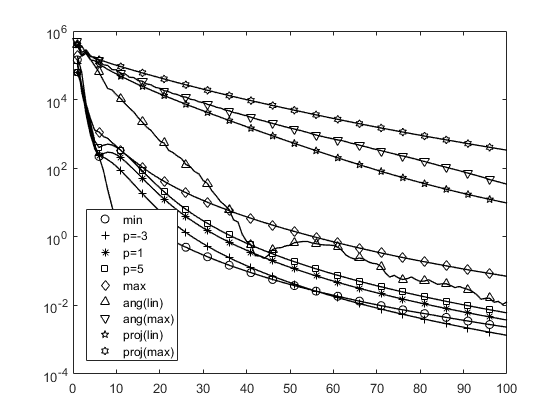}}
\label{fig:sim1c}
\caption{The convergence rates of the objective value with respect to number of iterations for different consensus methods for randomly generated quadratic objective functions.}
\label{fig:3.3}
\end{figure*}

\begin{figure*}
\centering
\subfloat[$k=5$]{
\includegraphics[scale=0.33]{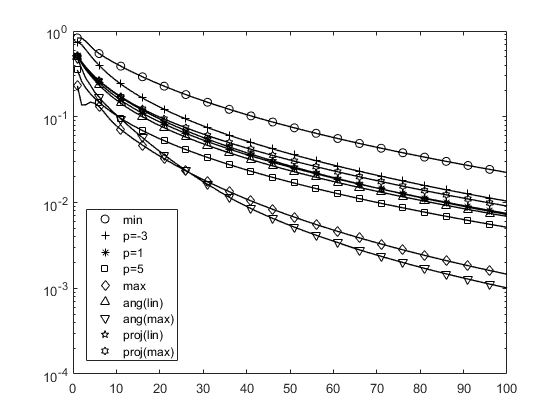}}
\label{fig:sim2a}
\subfloat[$k=25$]{
\includegraphics[scale=0.33]{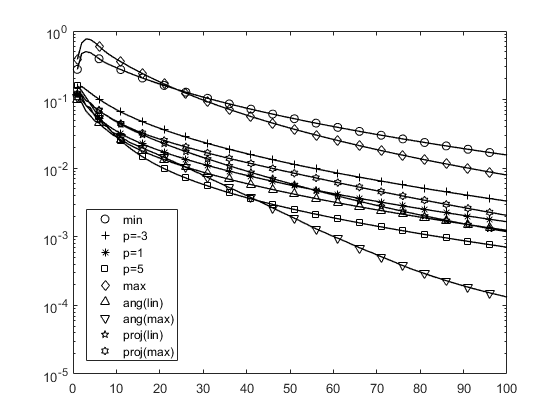}}
\label{fig:sim2b}
\subfloat[$k=100$]{
\includegraphics[scale=0.33]{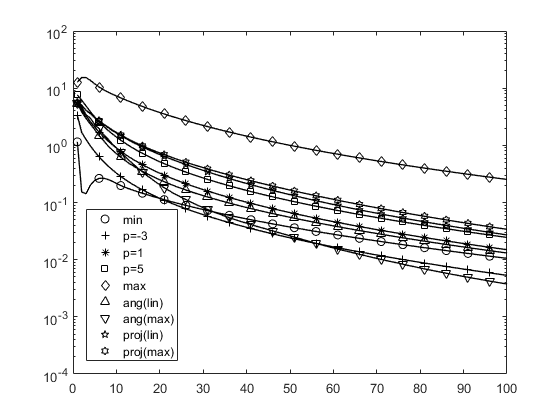}}
\label{fig:sim2c}
\caption{The convergence rates of the total deviation from the optimum with respect to number of iterations for different consensus methods for randomly generated quadratic objective functions.}
\label{fig:3.4}
\end{figure*}

Denote $\xB_j^i$ to be the variable part of $\xB^i$ stored at node $j$, and let $\xB_a:=[\cdots\es{\xB_i^i}^\top\es\cdots]^\top$. Note that the unique optimal value $F^*=F(\xB^*)$ achieved at $\xB^*$ can be easily solved by quadratic programming. \pref{fig:3.3} illustrates how $\frac{|F(\xB_a)-F^*|}{|F^*|}$ decreases as a function of number of iterations, and \pref{fig:3.4} shows the variation of $\sum_{i=1}^n\|\xB_i^i-\xB^*\|^2$. The plotted consensus schemes include the following: (1) component-wise $(p,w)$-mean scheme, i.e. \pref{alg:3.1} with $\vf_{i,t,l}=\hat{\vf}_p$ for all $i,t,l$ and taking linear average with weights $\WB$ given by
\[
W_{ij}=\left[\I\{i=j\}(\bar{d}-d_i)+\I\{i\neq j\}\right]/\bar{d},
\]
for $j\in Nb(i)$ where $\bar{d}=\max_id_i+1$ (and obviously $W_{ij}=0$ for $j\not\in Nb(i)$), (2) component-wise max and min operations, which are just $(p,w)$-mean and taking $p\ra\infty$ and $p\ra-\infty$\footnote{Note that our theory developed in \pref{sec:5} does not guarantee the convergences of max and min operations but a shrunk version of them.}, (3) angle minimization within shrunk convex hull given in \pref{alg:3.7}, and (4) angle minimization within shrunk cube hull given in \pref{alg:3.8}.

Generally, when the initial value is component-wise lower (resp. higher) than optimal value in the case of $k=5$ (resp. $k=100$), the optimal $p$ for fastest convergence is large (resp. small); when the initial value is around optimal value ($k=25$), the optimal $p$ is around 1. Let us focus on the $(p,w)$-mean first. \pref{fig:3.3} shows that when $k=5$, choosing $p=5$ makes convergence the fastest while max operation is the second best and min is the worst; when $k=25$, $p=5$ is still the fastest but $p=1$ is the second best, whereas max and min are the worst; when $k=100$, $p=-3$, min, and max are the best, second best, and worst. The gradient-oriented consensus in convex hull method decreases slower at first, but later catches up and lies somewhere between the group of $(p,w)$-mean schemes in all cases. Note that the convergence curves of the gradient-oriented methods can be adjusted by $\dl$ -- the smaller $\dl$ is, the closer the curves are to the linear method ($p=1$). The reason that min does not work well in \pref{fig:3.3} (a) is as follows. At each iteration, at node $i$ the component $\xB_i^j$ for $j\not\in Nb(i)$ is dragged upward by its neighbors towards the optimum, and the min operation drags the value back. Therefore, even if min is the fastest subprojection similar to max, it does not work well in this scenario. For the same reason max does not work well in \pref{fig:3.3} (c).

In \pref{fig:3.4} and for the $(p,w)$-mean family, max is the best and min is the worst for low initial value; $p=5$ is the best and max and min are the worst for median initial value; $p=-3$ is the best followed by min while max is the worst for high initial value. The angle minimization within convex hull algorithm outperforms the linear scheme slightly in all cases, but does not surpass the best method of the $(p,w)$-mean family in each case. The cube hull angle minimization method descends the fastest, as it tries to descend in the negative gradient direction as much as possible within the largest possible choice set, leading to the fastest decrease in objective value.

The main contribution of proposing these consensus schemes is providing the flexibility for the algorithm so that the ``best choice," in terms of the number of iterations required, is available for the given requirements. When running distributed optimization algorithms, there are two common termination criteria: one regarding the convergence of $\xB_i$ to $\xB^*$ for all $i$, and another concerning the convergence of $F(\xBb)$ to $F(\xB^*)$. If one cares about the latter much more than the former, e.g. the goal is to find the optimal value $F(\xB^*)$ within a tolerance of $10^{-3}$, then gradient-oriented within cube hull is likely the best choice from the simulation. If one cares about the former, from \pref{fig:3.3} we know that usually there is a better scheme in the $(p,w)$-mean family than the linear consensus. If the application focuses on a part of the objective function rather than the whole, we can also revise the objective function in \pref{alg:3.8} by tracking a gradient of that part.

\section{Conclusion and Future Directions}\label{sec:7}
In this paper, we studied various nonlinear consensus schemes for distributed optimization. We pointed out that from the stochastic approximation viewpoint, the consensus step is a subprojection onto the consensus plane which is the fast process, while the consensus variable descends to the optimum on the plane which is the slow process. From this perspective, many subprojections outside the paradigm of linear consensus can be considered for distributed optimization. We considered taking monotonic nonlinear transformations before taking linear averages; we established convergence when such consensus schemes are combined with NEXT, and analyzed the convergence rate when it is combined with DGD. We further reestablished the convergence result for NEXT when the averaging matrices are no longer column stochastic. Combining this relaxation with the nonlinear transformation idea allows us to choose any point in the ``shrunk cube hull" as the next iterate during the consensus step, which is a general consensus scheme for distributed optimization. Numerical results show that depending on the relation between initial variables and the optimum, various proposed schemes can outperform traditional linear consensus schemes.

One future direction is to investigate the local properties between iterates, objective functions, and consensus schemes, as the numerical advantages of the proposed consensus schemes seem to be data dependent. This dependency prohibits us to manifest the convergence rate advantages through the worst case analysis done in \pref{thm:3.8}. Another interesting direction would be extending the element-wise nonlinear transformations to \emph{Riemannian manifolds} \cite{Lee_Manifolds_2018}, which are invertible and preserve local geometric relations. In this paper, we saw that convergence can be ensured with Lipschitz element-wise transformations. It is of interest to study the cases where the coordinates are transformed through Riemannian manifolds altogether instead of individually, and identifying the conditions of the transformations that preserve convergence.

\bibliographystyle{IEEEtranS}
\bibliography{References}

\newpage

\appendix
\appendixpage

\section{Discussion for $p$-mean in Pure Consensus Setting}\label{app:1}
We first formally define $T_\ep$, and give the statement regarding $T_{\ep,p}$ in the following fact.
\beq\label{eq:3.13}
T_\ep\triangleq\inf\left\{t:\frac{V[t]}{V[0]}=\frac{\sum_i(x_i[t]-x^*)^2}{\sum_i(x_i[0]-x^*)^2}\leq\ep\right\}.
\eeq

\begin{fact}\label{fact:3.4}
Denote $T_{\ep,p}$ as the minimal number of iterations required for the sample variance to reduce to a fraction of $\ep$ using the $p$-mean consensus scheme in \eqref{eq:3.12}. Then given any $\xB[0]$, there exists small enough $\ep>0$ and large enough $p>1$ such that $T_{\ep,1}>T_{\ep,p}$.
\end{fact}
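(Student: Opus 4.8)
The plan is to pit both schemes against a single fixed horizon, the graph diameter $d\triangleq d(\mathcal{G})$: for small $\epsilon$ linear consensus provably cannot meet the tolerance within $d$ steps, whereas for large $p$ the $p$-mean does. Throughout I would fix $\mathbf{x}[0]\in\R_+^n$ that is not already at consensus (otherwise $T_{\epsilon,p}=0$ for every $p$ and there is nothing to prove), write $\mathbf{x}^{(p)}[t]$ for the iterates of \eqref{eq:3.12} started from $\mathbf{x}[0]$, $x^{*,p}=\big(\tfrac1n\sum_j x_j[0]^p\big)^{1/p}$ for the limit \eqref{eq:3.11} they approach, $V^{(p)}[t]=\sum_i(x^{(p)}_i[t]-x^{*,p})^2$, and $M=\max_j x_j[0]$.

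\emph{Step 1 (linear consensus is slow for small $\epsilon$).} For $p=1$ we have $\mathbf{x}^{(1)}[t]=\WB^{t}\mathbf{x}[0]$ and $V^{(1)}[t]=\|\WB^{t}(\mathbf{x}[0]-\bar{x}\mathbf{1})\|^2$ with $\bar{x}=\tfrac1n\sum_j x_j[0]$. The symmetric matrix $\WB$ has, on $\mathbf{1}^\perp$, all eigenvalues of modulus $<1$ (the standing assumption on $\WB$); assuming in addition $0\notin\mathrm{spec}(\WB)$ — true for the weight matrices used here — $\WB$ is injective, so $V^{(1)}[t]\ge\sigma^{2t}V^{(1)}[0]>0$ for all $t$, where $\sigma>0$ is the smallest singular value of $\WB$. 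Hence $T_{\epsilon,1}\ge\ln(1/\epsilon)/\big(2\ln(1/\sigma)\big)\to\infty$ as $\epsilon\downarrow0$, and I would fix $\epsilon_1>0$ so that $\epsilon<\epsilon_1$ forces $T_{\epsilon,1}>d$.

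\emph{Step 2 ($p$-mean beats horizon $d$ for large $p$, then conclude).} In the transformed domain $z=x^p$ one step of \eqref{eq:3.12} is linear, $\mathbf{z}[t+1]=\WB\mathbf{z}[t]$, so $x^{(p)}_i[d]=\big(\sum_j(\WB^{d})_{ij}\,x_j[0]^p\big)^{1/p}$. Because $\mathcal{G}$ is connected and $\WB$ may be taken with strictly positive diagonal (e.g. the Metropolis weights of \pref{sec:6}), $(\WB^{d})_{ij}>0$ for every pair $i,j$, and the elementary limit $\big(\sum_j a_{ij}y_j^p\big)^{1/p}\to\max_{j:\,a_{ij}>0}y_j$ for nonnegative weights summing to one gives $x^{(p)}_i[d]\to M$ for each $i$, and simultaneously $x^{*,p}\to M$. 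Therefore $V^{(p)}[d]\to0$ while $V^{(p)}[0]\to\sum_i(x_i[0]-M)^2>0$ (using that $\mathbf{x}[0]$ is not constant), so $V^{(p)}[d]/V^{(p)}[0]\to0$ and there is $p_1$ with $V^{(p)}[d]<\epsilon\,V^{(p)}[0]$ for all $p>p_1$, i.e. $T_{\epsilon,p}\le d$ by definition of $T_\epsilon$. Combining with Step~1, for every $\epsilon<\epsilon_1$ and $p>p_1$ we get $T_{\epsilon,p}\le d<T_{\epsilon,1}$, which is the claim.

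\emph{Main obstacle and caveat.} The crux is Step~2: one must control the $p$-mean over the \emph{fixed finite horizon} $d$ as $p\to\infty$ and, critically, check that the normalizing limit $x^{*,p}$ converges to $M$ at the same time, so that the ratio $V^{(p)}[d]/V^{(p)}[0]$ genuinely vanishes rather than settling at a positive value. A secondary point, flagged in Step~1, is that for a degenerate $\WB$ (one with a zero eigenvalue) and a non-generic $\mathbf{x}[0]$ lying in $\ker\WB$ after centering, linear consensus can attain \emph{exact} agreement in one step, in which case the inequality fails; ruling this out via $0\notin\mathrm{spec}(\WB)$ (satisfied by the matrices considered in this paper) is what keeps the argument clean.
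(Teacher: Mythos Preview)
Your argument is correct and follows the same two-part idea the paper sketches in \pref{sec:3.1}: the max operation (and hence the $p$-mean for large $p$) reaches consensus in $d(\Gc)$ steps, whereas linear consensus never drives $V[t]$ exactly to zero, so for small enough $\ep$ one has $T_{\ep,1}>d(\Gc)\ge T_{\ep,p}$. Your write-up is considerably more careful than the paper's informal justification --- in particular you make explicit the lower bound $V^{(1)}[t]\ge\sigma^{2t}V^{(1)}[0]$, the simultaneous limits $x_i^{(p)}[d]\to M$ and $x^{*,p}\to M$, and the caveat about $0\in\mathrm{spec}(\WB)$ (which the paper glosses over); one minor point is that positivity of all entries of $\WB^{d}$ really uses primitivity of $\WB$ (guaranteed here by the spectral-gap assumption) rather than the diameter alone, so you may need to replace $d$ by a slightly larger fixed horizon $k$ with $\WB^{k}>0$, but this does not affect the logic.
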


Given any consensus scheme, achieving a smaller \textit{tolerance} $\ep$ requires a larger $T_\ep$. For a fixed tolerance $\ep$, a faster consensus scheme takes smaller $T$; and given $T$, a faster consensus scheme achieves smaller $\ep$. The above fact states that for smaller tolerance requirement or larger number of iterations running, there exists a $p$-mean scheme converging faster than linear consensus in the pure consensus setting considered here. This is, however, not necessarily true when $T$ is not large enough, as given in the following counterexample.

\begin{ex}\label{ex:3.5}
Consider a ring of five nodes $\Vc=\{1,2,3,4,5\}$ and $\Ec=\{(1,2),(2,3),(3,4),(4,5),(5,1)\}$ with initial values $\xB[0]=[7\es2\es12\es2\es7]^\top$ and a doubly stochastic matrix $\WB$ where self-loops are weighed by $\frac{3}{5}$ and other edges are weighed by $\frac{1}{5}$. Then after one iteration, linear consensus applies $\WB$ on $\xB[0]$ and gives $[6\es5\es8\es5\es6]^\top$, while max operation yields $[7\es12\es12\es12\es7]^\top$. The ratio of sample variances of linear scheme $\frac{3}{35}$ is smaller than that of max operation $\frac{1}{5}$.
\end{ex}

\section{The Shrinking Span Lemma}\label{app:2}

\begin{dff}\label{dff:B.1}
For an ensemble vector $\XB\triangleq[\xB_1^\top\es\cdots\es \xB_n^\top]^\top$ where $\xB_i=[x_{i,1}\es\cdots\es x_{i,d}]^\top\in\R^d$ for $i\in[n]$, we define its range $sp(\XB)$ to be
\[
sp(\XB)\triangleq d\cdot\max_{l\in[d]}\left(\max_ix_{i,l}-\min_ix_{i,l}\right).
\]
Note that if we let $\XB_l\triangleq[x_{1,l}\es\cdots\es x_{n,l}]^\top$ to be the dimension $l\in[d]$ of the ensemble, then the range can be expressed by $sp(\XB)=d\cdot\max_{l\in[d]}sp(\XB_l)$. In the context of this paper, the $i\in[n]$ is the agent index while the $l\in[d]$ is the dimension index of the decision variable.
\end{dff}

\begin{lem}\label{lem:B.2}
Let $\WB\in\R^{n\times n}$ be a row stochastic matrix with all elements $w_{ij}\in[\vth,1-\vth]$ for some $\vth\in(0,0.5)$, and $\vB\in\R^n$ be a vector. Then $sp(\WB\vB)\leq(1-2\vth)sp(\vB)$.
\end{lem}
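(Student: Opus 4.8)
The plan is to prove the single-coordinate inequality $sp(\WB\vB) \le (1-2\vth)\,sp(\vB)$ treating $\vB$ as a vector in $\R^n$ (so here $sp(\vB) = \max_i v_i - \min_i v_i$, the $d=1$ instance of Definition~\ref{dff:B.1}); the general ensemble statement then follows coordinatewise since $\WB$ acts identically on each coordinate block and $sp$ of the ensemble is $d$ times the max over coordinates of the per-coordinate range. So it suffices to show: for any $\vB\in\R^n$, $\max_i (\WB\vB)_i - \min_i (\WB\vB)_i \le (1-2\vth)(\max_j v_j - \min_j v_j)$.

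First I would fix the indices $p,q\in[n]$ achieving $(\WB\vB)_p = \max_i(\WB\vB)_i$ and $(\WB\vB)_q = \min_i(\WB\vB)_i$, and write
\[
(\WB\vB)_p - (\WB\vB)_q = \sum_{j=1}^n (w_{pj} - w_{qj})\, v_j .
\]
Since $\WB$ is row stochastic, $\sum_j (w_{pj}-w_{qj}) = 0$, so this quantity is unchanged if I replace each $v_j$ by $v_j - c$ for any constant $c$; I would choose $c$ to be convenient, e.g. center the values. The standard trick: group the coefficients by sign. Let $S^+ = \{j : w_{pj} \ge w_{qj}\}$ and $S^- = \{j : w_{pj} < w_{qj}\}$, and set $\gamma \triangleq \sum_{j\in S^+}(w_{pj}-w_{qj}) = \sum_{j\in S^-}(w_{qj}-w_{pj}) \ge 0$ (equality of the two sums is exactly the row-stochasticity cancellation). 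Then
\[
(\WB\vB)_p - (\WB\vB)_q = \sum_{j\in S^+}(w_{pj}-w_{qj})v_j - \sum_{j\in S^-}(w_{qj}-w_{pj})v_j \le \gamma \max_j v_j - \gamma \min_j v_j = \gamma\, sp(\vB).
\]

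It remains to bound $\gamma \le 1 - 2\vth$. Here is where the hypothesis $w_{ij}\in[\vth, 1-\vth]$ enters: $\gamma = \sum_{j\in S^+} w_{pj} - \sum_{j\in S^+} w_{qj} \le \sum_{j\in S^+} w_{pj} - \sum_{j \in S^+}\vth$. Combining with $\sum_{j\in S^+} w_{pj} = 1 - \sum_{j\in S^-} w_{pj} \le 1 - \sum_{j\in S^-}\vth$, I get $\gamma \le 1 - \vth\,|S^-| - \vth\,|S^+| = 1 - \vth n \le 1 - 2\vth$ provided $n \ge 2$ and $S^+, S^-$ are both nonempty. The genuinely fiddly point — and the one I'd treat as the main obstacle — is handling the edge cases cleanly: if $S^-$ is empty then $\gamma = 0$ and the bound is trivial; if some of the "$|S^\pm|\ge 1$" counting is too crude one should instead argue $\sum_{j\in S^+} w_{pj} \le 1-\vth$ directly (since at least one index lies in $S^-$ as $\gamma>0$ forces $S^-\ne\emptyset$, and that index contributes $\ge\vth$ to $\sum w_{pj}$ over $S^-$), and $\sum_{j\in S^+} w_{qj}\ge \vth$ (since $S^+\ne\emptyset$). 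That gives $\gamma \le (1-\vth) - \vth = 1-2\vth$ with no dependence on $n$. I would write the final version this way to keep it airtight. Applying $(\WB\vB)_p-(\WB\vB)_q \le \gamma\,sp(\vB) \le (1-2\vth)\,sp(\vB)$ finishes the one-dimensional case, and multiplying through by $d$ and taking the max over coordinates gives the stated lemma.
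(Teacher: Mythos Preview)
Your proof is correct. It differs from the paper's, which is worth noting briefly.

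The paper fixes $a\in\argmax_i v_i$, $b\in\argmin_i v_i$, $a'\in\argmax_i (\WB\vB)_i$, $b'\in\argmin_i (\WB\vB)_i$, and in row $a'$ singles out the entry $W_{a'b}v_b$ while bounding every other $v_j$ above by $v_a$; symmetrically in row $b'$ it singles out $W_{b'a}v_a$ and bounds the rest below by $v_b$. Subtracting gives $sp(\WB\vB)\le (1-W_{a'b}-W_{b'a})(v_a-v_b)\le(1-2\vth)\,sp(\vB)$ in one line. Your argument instead writes the difference as $\sum_j(w_{pj}-w_{qj})v_j$, splits by the sign of $w_{pj}-w_{qj}$, and bounds via the quantity $\gamma=\sum_{j\in S^+}(w_{pj}-w_{qj})$, then shows $\gamma\le 1-2\vth$. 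This is essentially the Dobrushin ergodic coefficient computation; it is slightly more machinery but has the advantage that the same skeleton generalizes immediately (e.g.\ to bounds in terms of $\min_j\min\{w_{pj},w_{qj}\}$ or to non-square kernels). The paper's version is more elementary and avoids the $S^\pm$ nonemptiness edge cases you had to dispatch. Either route is fine here; the paper's is a touch shorter for this exact statement, yours is the one that scales.
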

\begin{proof}
Let $a\in\argmax_iv_i$, $b\in\argmin_iv_i$, $a'\in\argmax_i(Wv)_i$, and $b'\in\argmin_i(Wv)_i$. Then $sp(\vB)=v_a-v_b$, and
\allowdisplaybreaks[4]
\begin{align*}
sp(\WB\vB)
&=\left(\sum_{j=1}^nW_{a'j}v_j\right)-\left(\sum_{j=1}^nW_{b'j}v_j\right)\\
&=W_{a'b}v_b+\left(\sum_{j\neq b}W_{a'j}v_j\right)-W_{b'a}v_a-\left(\sum_{j\neq a}W_{b'j}v_j\right)\\
&\leq W_{a'b}v_b+\left(\sum_{j\neq b}W_{a'j}\right)v_a-W_{b'a}v_a-\left(\sum_{j\neq a}W_{b'j}\right)v_b\\
&=W_{a'b}v_b+\left(1-W_{a'b}\right)v_a-W_{b'a}v_a-\left(1-W_{b'a}\right)v_b\\
&=\left(1-W_{a'b}-W_{b'a}\right)\cdot(v_a-v_b)\leq(1-2\vth)sp(\vB).
\end{align*}
\end{proof}

\section{Proof of \pref{thm:3.7}}\label{app:3}\hypertarget{pf:thm:3.5}{}
The main intuition behind the proof is that even with time-varying and agent-dependent transformations, the consensus scheme is still a subprojection converging exponentially fast onto the consensus plane $\Cc$. \pref{lem:B.2} is the prototype of the convergence, which shows that the range of a vector shrinks geometrically when being multiplied by a doubly stochastic matrix with positive elements. With a fixed transformation, it is easy to see that the convergence property remains -- performing a change of variable $z=\vf(x)$, the nonlinear consensus scheme is a linear consensus scheme in the $z$-domain, so the range of $z$ decreases exponentially, which also implies the range of $x$ decreases exponentially because of the Lipschitz assumption. However, when the communication graph is not fully connected, the transformation is time-varying and agent-dependent, and the process is coupled with the distributed descent process, more careful treatment is required.

We start by showing that the nonlinear consensus scheme with time-varying and agent-dependent transformations and a fixed (possibly not fully-) connected communication graph $\Gc=(\Nc,\Ec)$ is a subprojection onto $\Cc$.  Let $D=diam(\Gc)$ be the diameter of the graph $\Gc$. It is easy to show that for any doubly stochastic matrix $\WB$ satisfying the constraint given in \pref{sec:2.1} ($W_{ij}>0$ if and only if $(i,j)\in\Ec$ for $i\neq j$ and $W_{ii}>0$), the matrix $\WB^D$ is a doubly stochastic matrix with positive elements. Let $\vth>0$ be the smallest entry of $\WB$. Without the transformations, the range of the vector will shrink by $(1-2\vth^D)$ from \pref{lem:B.2} every $D$ steps as the smallest entry in $\WB^D$ is $\vth^D$. We show that this is still the case with the transformations. We consider the averaging process
\[
\xB_i[t+1]=\vf_{i,t}^{-1}\left(\sum_{j=1}^nW_{ij}\vf_{i,t}(\xB_j[t])\right);
\]
note that without the descent process, different dimensions do not couple together, so we only focus on dimension $l\in[d]$: $x_{i,l}[t+1]=\vf_{i,t,l}^{-1}\left(\sum_{j=1}^nW_{ij}\vf_{i,t,l}(x_{j,l}[t])\right)$ and from now on we will omit the dimension index $l$. Consider at time $t$, we have $x_{\max}\triangleq\max_ix_i[t]$ and $x_{\min}\triangleq\min_ix_i[t]$, so that $sp(\XB[t])=x_{\max}-x_{\min}$ where $\XB=[x_1\es\cdots\es x_n]^\top$ is the ensemble vector of the variables from all agents (only for the considered dimension $l$). Let $i^*\in\argmax_ix_i[t]$ be one of the nodes with the maximum value at $t$; the value will spread through the network within $D$ steps. For any node $j$ that is a direct neighbor of $i^*$, assuming without loss of generality that $\vf_{j,t}$ is increasing instead of decreasing, we have
\allowdisplaybreaks[4]
\begin{align*}
\vf_{j,t}(x_j[t+1])&=\sum_kW_{jk}\vf_{j,t}(x_k[t])
=W_{ji^*}\vf_{j,t}(x_{i^*}[t])+\sum_{k\neq i^*}W_{jk}\vf_{j,t}(x_k[t])\\
&\geq W_{ji^*}\vf_{j,t}(x_{\max})+\sum_{k\neq i^*}W_{jk}\vf_{j,t}(x_{\min})\tag{*}\\
&\geq\vth\vf_{j,t}(x_{\max})+(1-\vth)\vf_{j,t}(x_{\min}).
\end{align*}
Hence, after one step, $x_j$ can be lower bounded by
\begin{align*}
x_j[t+1]&\geq x_{\min}+\frac{1}{L_+}\left[\vf_{j,t}(x_j[t+1])-\vf_{j,t}(x_{\min})\right]\\
&\geq x_{\min}+\frac{1}{L_+}\cdot\vth\left[\vf_{j,t}(x_{\max})-\vf_{j,t}(x_{\min})\right]\\
&\geq x_{\min}+\frac{\vth}{L_+L_-}(x_{\max}-x_{\min}),
\end{align*}
where the first inequality follows from the Lipschitz continuity of $\vf_{j,t}$, the second inequality follows from $(*)$, and the third inequality follows from the Lipschitz continuity of $\vf_{j,t}^{-1}$. Similarly, after two steps, for any node $j'$ whose shortest path to $i^*$ is within two hops, we have
\[
x_{j'}[t+2]-x_{\min}\geq\frac{\vth}{L_+L_-}\left(x_j[t+1]-x_{\min}\right)\geq\left(\frac{\vth}{L_+L_-}\right)^2(x_{\max}-x_{\min})
\]
as it will be averaged with one of $j$ which is $i^*$'s direct neighbor. We can conclude that after $D$ steps, for any node $j$ in the network, we will have
\[
x_j[t+D]\geq x_{\min}+\left(\frac{\vth}{L_+L_-}\right)^D(x_{\max}-x_{\min}),
\]
and by a symmetric argument
\[
x_j[t+D]\leq x_{\max}-\left(\frac{\vth}{L_+L_-}\right)^D(x_{\max}-x_{\min});
\]
thus, the range of $\XB[t+D]$ also shrinks with a factor smaller than $1$ in comparison to the range of $\XB[t]$:
\[
sp(\XB[t+D])\leq\left[1-2\left(\frac{\vth}{L_+L_-}\right)^D\right](x_{\max}-x_{\min})=\left[1-2\left(\frac{\vth}{L_+L_-}\right)^D\right]sp(\XB[t]).
\]
Note that the factor $\rho\triangleq\left[1-2\left(\frac{\vth}{L_+L_-}\right)^D\right]\in[0,1)$ because of the following two reasons. First, $\vth$ is defined as the smallest non-zero entry in $\WB$; we cannot have $1$ in $\WB$ since that will imply the graph is not connected, so $\vth$ will be the largest when all rows in $\WB$ have support equal to $2$ and all entries are either $0$ or $\frac{1}{2}$; hence, $0<\vth\leq\frac{1}{2}$. On the other hand, we have $L_+L_-\geq1$, where equality is attained only when all the transformations are linear. The factor $\rho$ only equals to $0$ in the trivial case of $\vth=\frac{1}{2}$, $D=1$, and $L_+L_-=1$; otherwise, it is positive but strictly less than $1$. The consensus scheme is thus a subprojection as the range of $\XB$ converges to $0$ as its components asymptotically agree.

Now we take the descent process into account. Note that the proof of Proposition 9 Part (a) in \cite{Scutari_NEXT_2016} still follows as it only involves the consensus scheme of the $\yB$ variable, which remains unchanged in our algorithm; the descent vector $\|\Dl\xB_i^{inx}[t]\|=\|\xB_i^{inx}[t]-\xB_i[t]\|\leq c_1$ is thus bounded by some constant. To prove \pref{thm:3.7}, we only have to show Proposition 9 Part (b) in \cite{Scutari_NEXT_2016}, i.e. the asymptotic agreement of $\xB_i$'s, with the nonlinear consensus scheme of $\xB$. Recall that in \pref{alg:3.1}, we have $\zB_i[t]=\xB_i[t]+\al[t]\Dl\xB_i^{inx}[t]$ and $\xB_i[t+1]=\vf_{i,t}^{-1}\left(\sum_{j=1}^nW_{ij}\vf_{i,t}(\zB_j[t])\right)$; expanding the result gives
\begin{align*}
&\eqsp\left|\vf_{i,t,l}^{-1}\left(\sum_{j=1}^nW_{ij}\vf_{i,t,l}(x_{j,l}[t])\right)-x_{i,l}[t+1]\right|\\
&\leq L_-\left|\sum_{j=1}^nW_{ij}\vf_{i,t,l}(x_{j,l}[t])-\vf_{i,t,l}(x_{i,l}[t+1])\right|\tag{Lipschitz continuity of $\vf_{i,t,l}^{-1}$}\\
&=L_-\left|\sum_{j=1}^nW_{ij}\vf_{i,t,l}(z_{j,l}[t]-\al[t]\Dl x_{j,l}^{inx}[t])-\vf_{i,t,l}(x_{i,l}[t+1])\right|\\
&\leq L_-\left|\sum_{j=1}^nW_{ij}\vf_{i,t,l}(z_{j,l}[t])-\vf_{i,t,l}(x_{i,l}[t+1])\right|\\
&\qquad+L_+L_-\sum_{j=1}^nW_{ij}\al[t]\left|\Dl x_{j,l}^{inx}[t]\right|\tag{Lipschitz continuity of $\vf_{i,t,l}$}\\
&=0+L_+L_-\sum_{j=1}^nW_{ij}\al[t]\left|\Dl x_{j,l}^{inx}[t]\right|
\leq c_2\al[t]
\end{align*}
for some fixed constant $c_2$ for dimension $l$. In the worst scenario, the added $\al$'s always go against the consensus direction, so that in the previous projection argument we have
\[
x_j[t+1]\geq x_{\min}+\frac{\vth}{L_+L_-}(x_{\max}-x_{\min})-c_2\al[t]
\]
for $i^*$'s direct neighbors and
\[
x_{j'}[t+2]\geq x_{\min}+\left(\frac{\vth}{L_+L_-}\right)^2(x_{\max}-x_{\min})-\frac{c_2\vth}{L_+L_-}\al[t]-c_2\al[t+1]
\]
for $i^*$'s $2$-hop neighbors, etc. After $D$ steps, we have the range of $\XB$ shrunk by
\[
sp(\XB[t+D])\leq\rho\cdot sp(\XB[t])+2c_2\sum_{s=0}^{D-1}\hat{\rho}^{D-1-s}\al[t+s]
\]
where $\hat{\rho}=\frac{\vth}{L_+L_-}\in(0,\frac{1}{2})$. Further iterating $t$ backwards to $t=0$ gives
\beq\label{eq:B.1}
sp(\XB[t])\leq\rho^{\lfloor t/D\rfloor}\cdot sp(\XB[0])+c_3\sum_{s=1}^t\hat{\rho}^{t-s}\al[s-1]\overset{t\ra\infty}{\longrightarrow}0.
\eeq
With the range converging to $0$, the variables from all the nodes asymptotically agree. Note that \pref{eq:B.1} holds for any dimension $l\in[d]$ and $\XB_l[t]=[x_{1,l}[t]\es\cdots\es x_{n,l}[t]]^\top$; therefore, it holds for the entire ensemble $\XB[t]\triangleq[\xB_1^\top[t]\es\cdots\es \xB_n^\top[t]]^\top$, with $sp(\XB[t])=d\cdot\max_{l\in[d]}sp(\XB_l[t])$ defined in \pref{dff:B.1}. Moreover, since the deviation from the mean $\|\xB_\perp[t]\|=\|\XB[t]-\1_n\otimes\xBb[t]\|\leq \|sp(\XB[t])\|$ will be bounded by the range, we established Eq. (62) in \cite{Scutari_NEXT_2016}. Also, since the form of \pref{eq:B.1} nearly coincides with that of Eq. (77) from \cite{Scutari_NEXT_2016}, Eq. (63) and Eq. (64) can be obtained in the same way as in \cite{Scutari_NEXT_2016}, and we have reestablished Part (b) of Proposition 9 from \cite{Scutari_NEXT_2016}.

\section{Proof of \pref{thm:3.8}}\label{app:4}\hypertarget{pf:thm:3.6}{}
We follow a framework similar to that in \cite{Yuan_DGDConv_2016} and \cite{Wei_MultCons_2018}, and the classical analysis of gradient descent \cite{Boyd_CvxOpt_2004}. We begin with one lemma and its corollary to show that the gradients are bounded from the gradient of the mean.

\begin{lem}[Bounded deviation from mean]\label{lem:B.3}
For any $i\in[n]$ and $t$, with a constant learning rate $\al[t]=\al$, we have
\beq\label{eq:B.2}
\|\xB_i[t]-\xBb[t]\|\leq sp(\XB[t])\leq \rho^{\lfloor t/D\rfloor}\cdot sp(\XB[0])+\frac{2\al BL_+L_-}{1-\hat{\rho}},
\eeq
where $\hat{\rho}=\frac{\vth}{L_+L_-}\in(0,\frac{1}{2})$, $\rho=1-2\hat{\rho}^D\in[0,1)$, and $D=diam(\Gc)$.
\end{lem}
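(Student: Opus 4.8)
The plan is to reduce the claim to the span estimate that was already established for the NEXT variant in the proof of \pref{thm:3.7} (\pref{app:3}), since the consensus step of \pref{alg:3.2} has exactly the same functional form as that of \pref{alg:3.1}. First I would dispose of the left inequality $\|\xB_i[t]-\xBb[t]\|\le sp(\XB[t])$: because $\xBb[t]=\frac1n\sum_j\xB_j[t]$ is a convex combination of the local copies, for each coordinate $l$ both $x_{i,l}[t]$ and $\bar x_l[t]$ lie in $[\min_j x_{j,l}[t],\max_j x_{j,l}[t]]$, so $|x_{i,l}[t]-\bar x_l[t]|\le\max_j x_{j,l}[t]-\min_j x_{j,l}[t]$; squaring, summing over $l$, and using $\sqrt d\le d$ gives $\|\xB_i[t]-\xBb[t]\|\le\sqrt d\,\max_l(\max_j x_{j,l}[t]-\min_j x_{j,l}[t])\le sp(\XB[t])$ by \pref{dff:B.1}.

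For the right inequality I would observe that \pref{alg:3.2} writes $\xB_i[t+1]=\vf_{i,t}^{-1}\big(\sum_j W_{ij}\vf_{i,t}(\xB_j[t]-\al[t]\gd f_j(\xB_j[t]))\big)$, i.e. a nonlinear subprojection applied to the perturbed vectors $\xB_j[t]-\al[t]\gd f_j(\xB_j[t])$, which is structurally identical to Line 9 of \pref{alg:3.1} with the surrogate-descent increment $\al[t]\Dl\xB_j^{inx}[t]$ replaced by the gradient increment $-\al[t]\gd f_j(\xB_j[t])$. The only property of that increment used in \pref{app:3} is a uniform bound on its magnitude; here such a bound is immediate and in fact cleaner than in the NEXT case, since Assumption A3 gives $\|\gd f_j(\xB_j[t])\|\le B$ for every $j$ and $t$ (hence each component is bounded by $B$), with no appeal to any auxiliary convergence result. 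Consequently the entire range-shrinking argument of \pref{app:3} goes through verbatim: over each window of $D=\mathrm{diam}(\Gc)$ steps the ``pure consensus'' part contracts $sp(\XB[\cdot])$ by the factor $\rho=1-2\hat\rho^D$ with $\hat\rho=\vth/(L_+L_-)$ (using that $\WB^D$ has all entries $\ge\vth^D$ together with the bi-Lipschitz constants $L_\pm$), while each injected perturbation of size $\le\al[t]B$ degrades the estimate only additively through the Lipschitz sandwich, contributing a factor $L_+L_-$. This yields exactly \eqref{eq:B.1}, namely $sp(\XB[t])\le\rho^{\lfloor t/D\rfloor}sp(\XB[0])+c_3\sum_{s=1}^t\hat\rho^{t-s}\al[s-1]$ with $c_3=2BL_+L_-$. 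Specializing to the constant learning rate $\al[t]\equiv\al$ collapses the geometric sum, $\sum_{s=1}^t\hat\rho^{t-s}\le\frac1{1-\hat\rho}$, and produces the claimed constant term $\frac{2\al BL_+L_-}{1-\hat\rho}$. Note that only the first-derivative (Lipschitz) hypotheses of \pref{thm:3.8} are used here; the second-derivative bounds are reserved for the aggregate-convergence part later in \pref{app:4}.

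The step that needs the most care — though it is really bookkeeping imported from \pref{app:3} rather than a new obstacle — is tracking how a perturbation injected at one node at step $s$ propagates under the later, time-varying and agent-dependent transformations without being amplified beyond one factor of $L_+L_-$ per application; this is precisely what forces the $D$-step (rather than one-step) contraction and the $\hat\rho$-weighted accumulation in \eqref{eq:B.1}. The point I would make explicit is that the resulting constant $c_3$ depends only on $B$, $L_+$, $L_-$ (and, through $\rho$ and $\hat\rho$, on $\vth$ and $D$) and not on the iterates themselves, so the bound is uniform in $t$; this in turn relies on the uniform bi-Lipschitz constants and the uniform gradient bound assumed in \pref{thm:3.8}, and it is consistent with the iterates remaining in a bounded region, which \eqref{eq:B.2} then confirms a posteriori. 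Everything else — the convexity estimate for the first inequality and the closed-form geometric sum for the constant step size — is routine.
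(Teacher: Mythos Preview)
Your proposal is correct and follows essentially the same approach as the paper's proof: both reduce to the span estimate \eqref{eq:B.1} of \pref{app:3} by observing that \pref{alg:3.2} has the same consensus structure as \pref{alg:3.1} with the bounded increment $\al\Dl\xB_j^{inx}$ replaced by $\al\gd f_j(\xB_j[t])$, invoke Assumption~A3 for the bound $B$, and then evaluate the geometric sum for constant $\al$. Your treatment of the left inequality $\|\xB_i[t]-\xBb[t]\|\le sp(\XB[t])$ is slightly more explicit than the paper's one-line reference to $\|\xB_\perp[t]\|\le sp(\XB[t])$, and your remark that only the first-derivative (Lipschitz) hypotheses are used here is a nice clarification, but the argument is otherwise identical.
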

\begin{proof}
To show this, notice that the process of DGD with nonlinear transformation (\pref{alg:3.2}) is actually very similar to the process of NEXT with nonlinear transformation (\pref{alg:3.1}); the derivation of \pref{thm:3.7}, particularly \pref{lem:B.2}, can therefore be reused here. Recall that in \pref{alg:3.1} at time $t$ for node $i$, the variable update is
\[
\xB_i[t+1]=\vf_{i,t}^{-1}\left(\sum_{j=1}^nW_{ij}\vf_{i,t}\big(\xB_j[t]+\al[t]\Dl\xB_j^{inx}[t]\big)\right),
\]
while in \pref{alg:3.2} the variable update is
\[
\xB_i[t+1]=\vf_{i,t}^{-1}\left(\sum_{j=1}^nW_{ij}\vf_{i,t}\big(\xB_j[t]-\al[t]\gd f_j(\xB_j[t])\big)\right).
\]
In \pref{alg:3.1}, the boundedness of $\Dl\xB_i^{inx}[t]$ is already established in Proposition 9 Part (a) in \cite{Scutari_NEXT_2016} with any divergent but square summable schedule of $\al[t]$. Similarly, for DGD, for a fixed learning constant $\al[t]=\al$, it is shown that with strongly convex smooth objective functions of the nodes, the gradients $\gd f_i(\xB_i[t])$'s will remain bounded. This result has to be reestablished now after adding transformations; however, for simplicity, we will just apply the assumption that the gradient $\gd f_i$ is uniformly bounded for all $i\in[n]$, i.e. $\|\gd f_i\|\leq B$.

In the proof of \pref{thm:3.7}, we show that every local averaging will move variables to each other at least by $\hat{\rho}=\frac{\vth}{L_+L_-}\in(0,\frac{1}{2})$ while being dragged apart by $c_2\al[t]$ in the worst case; the net effect is in every $D=diam(\Gc)$ steps, $sp(\XB[t])$ will be shrunk by $\rho=1-2\hat{\rho}^D\in[0,1)$ and will have an additional term $2c_2\sum_{s=0}^{D-1}\hat{\rho}^{D-1-s}\al[t+s]$. Here, the boundedness of $\Dl\xB_j^{inx}[t]$ is replaced by the boundedness of $\gd f_i$, so that similar to \pref{eq:B.1} we have
\beq\label{eq:B.3}
sp(\XB[t])\leq\rho^{\lfloor t/D\rfloor}\cdot sp(\XB[0])+2BL_+L_-\sum_{s=1}^t\hat{\rho}^{t-s}\al[s-1].
\eeq
With $\al[t]=\al$ being constant and individual node's deviation from the mean bounded by the range
\[
\|\xB_i[t]-\xBb[t]\|\leq\|\xB_\perp[t]\|=\|\XB[t]-\1_n\otimes\xBb[t]\|\leq sp(\XB[t]),
\]
we have \pref{eq:B.2} for all $t$ and $i$.
\end{proof}

\begin{cor}[Bounded gradient deviation from gradient of mean]\label{cor:B.4}
For any $i\in[n]$ and $t$, with a constant learning rate $\al[t]=\al$ and all objective functions $f_i$'s being smooth, i.e. $\|\gd f_i(\xB_1)-\gd f_i(\xB_2)\|\leq L_f\|\xB_1-\xB_2\|$ for any $i\in[n]$ and $\xB_1,\xB_2\in\R^d$ and some constant $L_f>0$, we have
\beq\label{eq:B.4}
\|\gB[t]-\gBb[t]\|\leq L_f\left[\rho^{\lfloor t/D\rfloor}\cdot sp(\XB[0])+\frac{2\al BL_+L_-}{1-\hat{\rho}}\right],
\eeq
where $\hat{\rho}=\frac{\vth}{L_+L_-}\in(0,\frac{1}{2})$, $\rho=1-2\hat{\rho}^D\in[0,1)$, $D=diam(\Gc)$, $\gB[t]\triangleq\frac{1}{n}\sum_{i=1}^n\gd f_i(\xB_i[t])$ and $\gBb[t]\triangleq\frac{1}{n}\sum_{i=1}^n\gd f_i(\xBb[t])$.
\end{cor}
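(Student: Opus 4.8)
The plan is to derive \eqref{eq:B.4} directly from \pref{lem:B.3} using only the triangle inequality and the smoothness (Lipschitz-gradient) hypothesis on each $f_i$. The key structural observation is that $\gB[t]$ and $\gBb[t]$ are uniformly weighted averages over the same index set $[n]$, differing only in the point at which each $\gd f_i$ is evaluated: node $i$'s local iterate $\xB_i[t]$ for $\gB[t]$, versus the ensemble mean $\xBb[t]$ for $\gBb[t]$. So their difference is an average of the termwise gradient discrepancies, each of which is controlled by how far $\xB_i[t]$ is from $\xBb[t]$.

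First I would write $\gB[t]-\gBb[t]=\frac1n\sum_{i=1}^n\big(\gd f_i(\xB_i[t])-\gd f_i(\xBb[t])\big)$ and apply the triangle inequality to obtain $\|\gB[t]-\gBb[t]\|\leq\frac1n\sum_{i=1}^n\|\gd f_i(\xB_i[t])-\gd f_i(\xBb[t])\|$. Invoking the smoothness assumption with $\xB_1=\xB_i[t]$ and $\xB_2=\xBb[t]$ bounds each summand by $L_f\|\xB_i[t]-\xBb[t]\|$, so $\|\gB[t]-\gBb[t]\|\leq\frac{L_f}{n}\sum_{i=1}^n\|\xB_i[t]-\xBb[t]\|$.

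Next I would apply \pref{lem:B.3}, which gives, uniformly in $i$, the estimate $\|\xB_i[t]-\xBb[t]\|\leq sp(\XB[t])\leq\rho^{\lfloor t/D\rfloor}sp(\XB[0])+\frac{2\al BL_+L_-}{1-\hat{\rho}}$. Since the right-hand side does not depend on $i$, averaging over $i\in[n]$ leaves it unchanged; multiplying through by $L_f$ yields exactly \eqref{eq:B.4}, with the constants $\hat{\rho}=\frac{\vth}{L_+L_-}$, $\rho=1-2\hat{\rho}^D$, and $D=diam(\Gc)$ inherited verbatim from \pref{lem:B.3}.

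There is essentially no obstacle here: the argument uses only the definition of $\gB[t],\gBb[t]$ as equal-weight averages, convexity of the norm, the smoothness hypothesis, and the already-proved \pref{lem:B.3}. The single point worth flagging is that the Lipschitz-gradient constant $L_f$ must be the common bound $L_f=\max_i L_i$ of Assumption A4 (as in \pref{thm:3.8}), so that the per-$i$ inequality holds simultaneously for all $i$; this uniformity is precisely what lets the sum over $i$ collapse into a single bound after dividing by $n$.
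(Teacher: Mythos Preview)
Your proposal is correct and follows exactly the paper's own argument: the paper likewise writes $\|\gB[t]-\gBb[t]\|\leq\frac{1}{n}\sum_{i=1}^n\|\gd f_i(\xB_i[t])-\gd f_i(\xBb[t])\|\leq\frac{L_f}{n}\sum_{i=1}^n\|\xB_i[t]-\xBb[t]\|$ and then invokes \pref{lem:B.3}. There is nothing to add.
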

\begin{proof}
The result simply follows from the Lipschitz continuity of $\gd f_i$'s and \pref{lem:B.3}, as
\[
\|\gB[t]-\gBb[t]\|\leq\frac{1}{n}\sum_{i=1}^n\left\|\gd f_i(\xB_i[t])-\gd f_i(\xBb[t])\right\|\leq\frac{L_f}{n}\sum_{i=1}^n\left\|\xB_i[t]-\xBb[t]\right\|.
\]
\end{proof}

Since all objective functions $f_i$'s have Lipschitz continuous gradient with constant $L_f$, their sum $F=\sum_if_i$ has Lipschitz continuous gradient with constant $nL_f$. Denote $\Dl\xBb[t]\triangleq\frac{1}{n}\sum_{i=1}^n(\xB_i[t+1]-\xB_i[t])$. Applying the descent lemma to get
\bal\label{eq:B.5}
&\eqsp F(\xBb[t+1])-F(\xBb[t])\\
&\leq\gd F(\xBb[t])^\top(\xBb[t+1]-\xBb[t])+\frac{nL_f}{2}\|\xBb[t+1]-\xBb[t]\|^2\\
&=n\gBb[t]^\top\Dl\xBb[t]+\frac{nL_f}{2}\|\Dl\xBb[t]\|^2\\
&=-n\al\|\gBb[t]\|^2+n\gBb[t]^\top(\Dl\xBb[t]+\al\gBb[t])+\frac{nL_f}{2}\|\Dl\xBb[t]+\al\gBb[t]-\al\gBb[t]\|^2\\
&=-n\al\left(1-\frac{\al L_f}{2}\right)\|\gBb[t]\|^2+n(1-\al L_f)\gBb[t]^\top(\Dl\xBb[t]+\al\gBb[t])+\frac{nL_f}{2}\|\Dl\xBb[t]+\al\gBb[t]\|^2\\
&\leq-n\left(\al-\frac{\al^2L_f}{2}-\frac{c_4(1-\al L_f)}{2}\right)\|\gBb[t]\|^2+\frac{n}{2}\left(L_f+\frac{(1-\al L_f)}{c_4}\right)\|\Dl\xBb[t]+\al\gBb[t]\|^2
\eal
for any constant $c_4>0$, where the last inequality follows from the perfect square formula
\begin{align*}
&\eqsp n(1-\al L_f)\gBb[t]^\top(\Dl\xBb[t]+\al\gBb[t])\\
&\leq\frac{c_4n(1-\al L_f)}{2}\|\gBb[t]\|^2+\frac{n(1-\al L_f)}{2c_4}\|\Dl\xBb[t]+\al\gBb[t]\|^2.
\end{align*}
Taking $c_4=\al$, \pref{eq:B.5} reduces to
\beq\label{eq:B.6}
F(\xBb[t+1])\leq F(\xBb[t])-\frac{n\al}{2}\|\gBb[t]\|^2+\frac{n}{2\al}\|\Dl\xBb[t]+\al\gBb[t]\|^2.
\eeq

In DGD (with nonlinear transformation averaging) we let $\zB_i[t]=\xB_i[t]+\al\gd f_i(\xB_i[t])$ be the result of local optimization at node $i$, so that the new iterate is $\xB_i[t+1]=\vf_{i,t}^{-1}\left(\sum_{j=1}^nW_{ij}\vf_{i,t}(\zB_j[t])\right)$. The term $\|\Dl\xBb[t]+\al\gBb[t]\|$ in \pref{eq:B.6} can then be bounded by 
\allowdisplaybreaks
\begin{align*}
&\eqsp\|\Dl\xBb[t]+\al\gBb[t]\|\\
&\leq\|\al\gBb[t]-\al\gB[t]\|+\|\Dl\xBb[t]+\al\gB[t]\|\\
&=\al\|\gB[t]-\gBb[t]\|+\left\|\frac{1}{n}\sum_{i=1}^n\xB_i[t+1]-\frac{1}{n}\sum_{i=1}^n\xB_i[t]+\frac{\al}{n}\sum_{i=1}^n\gd f_i(\xB_i[t])\right\|\\
&=\al\|\gB[t]-\gBb[t]\|+\frac{1}{n}\left\|\sum_{i=1}^n\left[\vf_{i,t}^{-1}\left(\sum_{j=1}^nW_{ij}\vf_{i,t}(\zB_j[t])\right)-\zB_i[t]\right]\right\|\\
&\leq\al\|\gB[t]-\gBb[t]\|+\frac{1}{n}\sum_{l=1}^d\left|\sum_{i=1}^n\left[\vf_{i,t,l}^{-1}\left(\sum_{j=1}^nW_{ij}\vf_{i,t,l}(z_{j,l}[t])\right)-z_{i,l}[t]\right]\right|.\tag{since $\|\cdot\|_2\leq\|\cdot\|_1$}\\\numberthis\label{eq:B.7}
\end{align*}

We now use Taylor's theorem to expand $\vf$ and $\vf^{-1}$ with linear approximation (first order Taylor polynomial) and a quadratic remainder. For shorthand of notation, we will omit the dimension index $l$ and the time index $t$; it is understood that the derivation works for all $l$ and $t$. We first expand $\vf_i(z_j)$ centered at $\zb_i^W\triangleq\sum_{k=1}^nW_{ik}z_k$:
\[
\vf_i(z_j)=\vf_i(\zb_i^W)+\vf_i'(\zb_i^W)(z_j-\zb_i^W)+\frac{\vf_i''(\xi_{ij}^W)}{2}(z_j-\zb_i^W)^2
\]
for some number $\xi_{ij}^W$ between $\zb_i^W$ and $z_j$. After being weighted-summed over $W_{ij}$, the linear term cancels out
\begin{align*}
&\eqsp\sum_{j=1}^nW_{ij}\vf_i(z_j)\\
&=\vf_i(\zb_i^W)\cdot\sum_{j=1}^nW_{ij}+\vf_i'(\zb_i^W)\left(\sum_{j=1}^nW_{ij}z_j-\zb_i^W\cdot\sum_{j=1}^nW_{ij}\right)+\sum_{j=1}^nW_{ij}\frac{\vf_i''(\xi_{ij}^W)}{2}(z_j-\zb_i^W)^2\\
&=\vf_i(\zb_i^W)+\sum_{j=1}^nW_{ij}\frac{\vf_i''(\xi_{ij}^W)}{2}(z_j-\zb_i^W)^2,
\end{align*}
where $\sum_{j=1}^nW_{ij}=1$ as $\WB$ is row stochastic. Let $Q_{ij}^W\triangleq\sum_{j=1}^nW_{ij}\frac{\vf_i''(\xi_{ij}^W)}{2}(z_j-\zb_i^W)^2$ denote the quadratic remainder term. Next, we expand $\vf_i^{-1}\left(\sum_{j=1}^nW_{ij}\vf_i(z_j)\right)$ centered at $\vf_i(\zb_i^W)$:
\begin{align*}
&\eqsp\vf_i^{-1}\left(\sum_{j=1}^nW_{ij}\vf_i(z_j)\right)\\
&=\vf_i^{-1}\big(\vf_i(\zb_i^W)\big)+{\vf_i^{-1}}'\big(\vf_i(\zb_i^W)\big)Q_{ij}^W+\frac{{\vf_i^{-1}}''(\vsg_{ij}^W)}{2}{Q_{ij}^W}^2\\
&=\zb_i^W+{\vf_i^{-1}}'\big(\vf_i(\zb_i^W)\big)Q_{ij}^W+\frac{{\vf_i^{-1}}''(\vsg_{ij}^W)}{2}{Q_{ij}^W}^2
\end{align*}
for some number $\vsg_{ij}^W$ between $\vf_i(\zb_i^W)$ and $\sum_{j=1}^nW_{ij}\vf_i(z_j)$.
With the expansions, the term inside the absolute value sign in \pref{eq:B.7} then becomes
\begin{align*}
&\eqsp\sum_{i=1}^n\left[\vf_{i,t,l}^{-1}\left(\sum_{j=1}^nW_{ij}\vf_{i,t,l}(z_{j,l}[t])\right)-z_{i,l}[t]\right]\\
&=\sum_{i=1}^n\left[\sum_{k=1}^nW_{ik}z_{k,l}[t]+{\vf_{i,t,l}^{-1}}'\big(\vf_{i,t,l}(\zb_{i,l}^W[t])\big)Q_{ij,l}^W[t]+\frac{{\vf_{i,t,l}^{-1}}''(\vsg_{ij,l}^W[t])}{2}{Q_{ij,l}^W[t]}^2-z_{i,l}[t]\right]\\
&=\sum_{i=1}^n\left[{\vf_{i,t,l}^{-1}}'\big(\vf_{i,t,l}(\zb_{i,l}^W[t])\big)Q_{ij,l}^W[t]+\frac{{\vf_{i,t,l}^{-1}}''(\vsg_{ij,l}^W[t])}{2}{Q_{ij,l}^W[t]}^2\right]+\sum_{k=1}^nz_{k,l}[t]\cdot\sum_{i=1}^nW_{ik}-\sum_{i=1}^nz_{i,l}[t]\\
&=\sum_{i=1}^n\left[{\vf_{i,t,l}^{-1}}'\big(\vf_{i,t,l}(\zb_{i,l}^W[t])\big)Q_{ij,l}^W[t]+\frac{{\vf_{i,t,l}^{-1}}''(\vsg_{ij,l}^W[t])}{2}{Q_{ij,l}^W[t]}^2\right].\numberthis\label{eq:B.8}
\end{align*}
Recall that the magnitudes of the first and second derivatives of $\vf^{-1}$ (resp. $\vf$) are assumed to be uniformly bounded by $L_-$ (resp. $L_+$). In addition, the quadratic remainder term $Q_{ij,l}^W[t]$ can be bounded by
\beq\label{eq:B.9}
\left|Q_{ij,l}^W[t]\right|\triangleq\left|\sum_{j=1}^nW_{ij}\frac{\vf_{i,l}''(\xi_{ij,l}^W[t])}{2}(z_{j,l}[t]-\zb_{i,l}^W[t])^2\right|\leq\frac{L_+}{2}sp(\ZB_l[t])^2.
\eeq
Substituting \pref{eq:B.8} and \pref{eq:B.9} back into \pref{eq:B.7}, we get
\allowdisplaybreaks[1]
\begin{align*}
&\eqsp\|\Dl\xBb[t]+\al\gBb[t]\|\\
&\leq\al\|\gB[t]-\gBb[t]\|+\frac{1}{n}\sum_{l=1}^d\left[\frac{nL_+L_-}{2}sp(\ZB_l[t])^2+\frac{nL_+^2L_-}{8}sp(\ZB_l[t])^4\right]\\
&\leq\al\|\gB[t]-\gBb[t]\|+\frac{L_+L_-}{2}sp(\ZB[t])^2+\frac{L_+^2L_-}{8}sp(\ZB[t])^4\\
&\leq\al L_f\left[\rho^{\lfloor t/D\rfloor}\cdot sp(\XB[0])+\frac{2\al BL_+L_-}{1-\hat{\rho}}\right]+\frac{L_+L_-}{2}\big[sp(\XB[t])+\al B\big]^2+\frac{L_+^2L_-}{8}\big[sp(\XB[t])+\al B\big]^4\tag{by \pref{eq:B.4}}
\\
&\leq\al L_f\left[\rho^{\lfloor t/D\rfloor}\cdot sp(\XB[0])+\frac{2\al BL_+L_-}{1-\hat{\rho}}\right]+\frac{L_+L_-}{2}\left[\rho^{\lfloor t/D\rfloor}\cdot sp(\XB[0])+\frac{2\al BL_+L_-}{1-\hat{\rho}}+\al B\right]^2\\
&\qquad+\frac{L_+^2L_-}{8}\left[\rho^{\lfloor t/D\rfloor}\cdot sp(\XB[0])+\frac{2\al BL_+L_-}{1-\hat{\rho}}+\al B\right]^4
\tag{by \pref{eq:B.2}}\\
&=O(\al^2)
\\\numberthis\label{eq:B.10}
\end{align*}
when $t$ is large and $\al$ is small. This is because $\rho^{\lfloor t/D\rfloor}\cdot sp(\XB[0])\longrightarrow0$ when $t\ra\infty$, and all the variables other than $\al$ are just fixed constants; after removing  the terms containing $\rho^{\lfloor t/D\rfloor}\cdot sp(\XB[0])$, the remaining terms are either $O(\al^2)$ or $O(\al^4)$, and the former dominates.

Meanwhile, as $f_i$'s are strongly convex with constant $\nu$, it follows that $F$ is strongly convex with constant $n\nu$, so that
\[
F(\xB^*)\geq F(\xBb[t])-\frac{1}{2n\nu}\|\gd F(\xBb[t])\|^2=F(\xBb[t])-\frac{n}{2\nu}\|\gBb[t]\|^2,
\]
which gives
\beq\label{eq:B.11}
-\frac{n\al}{2}\|\gBb[t]\|^2\leq-\al\nu\big[F(\xBb[t])-F(\xB^*)\big].
\eeq
Substituting \pref{eq:B.10} and \pref{eq:B.11} into \pref{eq:B.6} gives
\bal\label{eq:B.12}
&\eqsp F(\xBb[t+1])-F(\xB^*)\\
&\leq(1-\al\nu)\big[F(\xBb[t])-F(\xB^*)\big]+\frac{n}{2\al}\big[O(\al^2)\big]^2\\
&=\bar{\rho}\big[F(\xBb[t])-F(\xB^*)\big]+O(\al^3)\\
&\leq\bar{\rho}^{t+1}\big[F(\xBb[0])-F(\xB^*)\big]+\sum_{s=0}^t\bar{\rho}^{t-s}O(\al^3)\\
&\leq\bar{\rho}^{t+1}\big[F(\xBb[0])-F(\xB^*)\big]+\frac{1}{1-\bar{\rho}}O(\al^3)\\
&=\bar{\rho}^{t+1}\big[F(\xBb[0])-F(\xB^*)\big]+\frac{1}{\al\nu}O(\al^3)\\
&=\bar{\rho}^{t+1}\big[F(\xBb[0])-F(\xB^*)\big]+O(\al^2),
\eal
where the factor $\bar{\rho}\triangleq1-\al\nu$ is in $[0,1)$ by taking any $\al<\frac{1}{L_f}$ so that $\al\nu<\frac{\nu}{L_f}\in(0,1]$. The second inequality follows from repeatedly applying the inequality backwards from $t$ to $0$. In addition, the strong convexity of $F$ implies
\beq\label{eq:B.13}
\frac{n\nu}{2}\|\xBb[t+1]-\xB^*\|^2\leq F(\xBb[t+1])-F(\xB^*).
\eeq

\section{Proof of \pref{thm:3.10}}\label{app:5}\hypertarget{pf:thm:3.8}{}
Notice that any point in $\dl\circ co(\{\zB_j[t]:j\in Nb(i)\})$ can be written as the convex combination of $\{\zB_j[t]:j\in Nb(i)\}$ with positive weights. Hence, given any choice of $\xB_i[n+1]$ we can find a distribution $\{W_{ij}^x[t]\}_{j=1}^n$ such that $\xB_i[t+1]=\sum_{j=1}^nW_{ij}^x[t]\zB_j[t]$, where $W_{ij}^x[t]\geq\frac{1-\dl}{|N(i)|}\geq\frac{1-\dl}{n}>0$ for $j\in Nb(i)$ and $W_{ij}^x[t]=0$ for $j\not\in Nb(i)$. Simply speaking, if we group these weights to form a matrix $\WB^x[t]$, it satisfies the constraints described in \pref{sec:2.1} except it may not be column stochastic.

It is a standard result \cite{Wolfowitz_ProductMatrix_1963} that there exists a distribution $\{\zt_i\}_{i=1}^n$ such that
\[
\lim_{t\ra\infty}\WB^x[t]\WB^x[t-1]\cdots\WB^x[1]=\1_n\tilde{\zB}^\top,
\]
which can be argued as follows. From \pref{lem:B.2}, we know that applying $\WB^x[t]$ to any column vector $\vB$, the range of the vector $sp(\vB)$ will decrease at least by a factor of $1-\frac{2(1-\dl)}{n}$ if $\WB^x[t]$ is row stochastic. Since all the matrices $\WB^x[2],\WB^x[3],\dots$ are row stochastic, each column of $\WB^x[1]$ will align with $\1_n$ when multiplied by $\Pi_{s=2}^\infty\WB^x[s]$ as the range of the column converges to $0$ while the entries in the column asymptotically agree. The overall product is hence in the form of $\1_n\tilde{\zB}^\top$. Note that when all $\WB^x[t]$'s are doubly stochastic, the distribution $\tilde{\zB}$ is uniform.

The original proof presented in \cite{Scutari_NEXT_2016} can be directly used here with the following revisions: $\xBb[t]$ replaced by $\xB_c[t]=\sum_{i=1}^n\zt_i\xB_i[t]$, and all $\frac{1}{n}\1\1^\top$ related to $\xB$ including in the definition of $\JB$ should be replaced by $\1\tilde{\zB}^\top$.

\end{document}